\def\Hom{\mathop{\rm Hom}\nolimits}
\def\ker{\mathop{\rm ker}\nolimits}
\def\im{\mathop{\rm im}\nolimits}
\def\coker{\mathop{\rm coker}\nolimits}
\def\symp{\mbox{\sl Symp}}
\def\note#1{\marginpar{\raggedright\if@twoside\ifodd\c@page\raggedleft\fi\fi\sf\scriptsize RMK: #1}}
\DeclareMathOperator{\pr}{pr}
\DeclareMathOperator{\Ext}{Ext}
\DeclareMathOperator{\Gl}{Gl}
\DeclareMathOperator{\Cl}{Cl}
\DeclareMathOperator{\id}{id}
\DeclareMathOperator{\Spec}{Spec}
\DeclareMathOperator{\Cone}{Cone}
\newcommand{\stcky}[1]{\text{{\boldmath{$#1$}}}}
\newcommand{\mf}{\mathfrak}
\newcommand{\mr}{\mathrm}
\newcommand{\mc}{\mathcal}
\newcommand{\N}{\mathbb{N}}
\newcommand{\Q}{\mathbb{Q}}
\newcommand{\R}{\mathbb{R}}
\newcommand{\Z}{\mathbb{Z}}
\newcommand{\C}{\mathbb{C}}
\renewcommand{\iff}{if and only if }
\newcommand{\wrt}{with respect to }
\newcommand{\st}{such that }
\newcommand{\Id}{\mr{Id}}
\newcommand{\be}{\begin{equation}}
\newcommand{\ben}{\begin{equation}\nonumber}
\newcommand{\ee}{\end{equation}}
\newcommand{\OLD}[1]{}
\newcommand{\TT}{\mathbb{T}}
\newcommand{\ZZ}{\mathbb{Z}}
\newcommand{\RR}{\mathbb{R}}
\newcommand{\PP}{\mathbb{P}}
\newcommand{\CC}{\mathbb{C}}
\newcommand{\XX}{\mathcal{X}}
\newcommand{\skalp}[1]{\langle #1 \rangle}
\newcommand{\veraltet}[1]{}
\newcommand{\Stacky}{\mathbf{\Sigma}}
\newcommand{\quotient}[2]{#1 / #2}
\newcommand{\Bhalfarray}{
\setlength{\arraycolsep}{0.5\arraycolsep}
\begin{array}
}
\newcommand{\Ehalfarray}{
\end{array}
\setlength{\arraycolsep}{2\arraycolsep}
}
\newcommand{\Btightarray}{
\setlength{\savearraycolsep}{\arraycolsep}
\arraycolsep1pt
\begin{array}
}
\newcommand{\Etightarray}{
\end{array}
\setlength{\arraycolsep}{\savearraycolsep}
}
\newcommand{\Epicture}{\end{pspicture}}
\newcommand{\ownpsunit}{\psset{unit=0.05\textwidth}}
\newcommand{\qcircle}{\pscircle[fillstyle=solid,fillcolor=white]}
\newcommand{\Bpicture}{\centering\ownpsunit\begin{pspicture}}
\newcommand{\refmoritadiag}{(M1){}}
\newcommand{\refmoritadiagp}{(M1'){}}
\newcommand{\refmoritasurj}{(M2){}}
\newcommand{\figproj}{
\Bpicture(8,4)
\psline(1,1)(3,1)(1,3)(1,1)
\psline{->}(1.6,1)(1.6,1.4)
\psline{->}(1,1.6)(1.4,1.6)
\psline{->}(2,2)(1.7,1.7)
\uput[180](1,2){$F_1$}
\uput[-90](2,1){$F_2$}
\uput[45](2,2){$F_3$}
\psset{origin={4,0}}
\multips(0,0)(0,1){5}{
\multips(0,0)(1,0){5}{\pscircle(0,0){0.02}}}
\psline(0,0)(2,2)(2,4)(2,2)(4,2)
\qcircle(1,1){0.1}
\qcircle(2,3){0.1}
\qcircle(3,2){0.1}
\uput[-90](7,2){$u_1$}
\uput[180](6,3){$u_2$}
\uput[-45](5,1){$u_3$}
\Epicture
}
\newcommand{\figweightproj}{
\Bpicture(8.5,4)
\psline(0.5,1.5)(3.5,1.5)(0.5,3)(0.5,1.5)
\psline{->}(0.5,2.25)(0.9,2.25)
\psline{->}(2,1.5)(2,1.9)
\psline{->}(1.7,2.4)(1.5,2)
\uput[180](0.5,2.25){$F_1$}
\uput[-90](2,1.5){$F_2$}
\uput[60](1.7,2.4){$F_3$}
\psset{origin={4.5,0}}
\multips(0,0)(0,1){5}{ \multips(0,0)(1,0){5}{
\pscircle(0,0){0.02} }}
\psline(0.75,-0.5)(2,2)(2,4)(2,2)(4,2)
\qcircle(1,0){0.1}
\qcircle(2,3){0.1}
\qcircle(3,2){0.1}
\uput[-90](7.5,2){$u_1$}
\uput[180](6.5,3){$u_2$}
\uput[-30](5.5,0){$u_3$}
\Epicture
}
\newcommand{\narrenkappe}{
\Bpicture(12,4)
\qcircle(2,2){1.5}
\psline(2,2)(2,0.5)
\qdisk(2,2){0.1}
\pscustom{
\psarc(6,2){1.5}{90}{280}
\psline(6,2)(6,2.01)}
\pscustom{
\psline(6,2)(6,2.01)
\psarc(6,2.1){1.4}{260}{90}}
\psarc[fillstyle=solid,arcsepA=0.5](6,2.1){1.4}{260}{90}
\qdisk(6,2){0.1}
\psarcn{->}(6,2){1}{270}{200}
\psset{origin={10,3.5}}
\SpecialCoor
\psline(3.5;250)(0,0)(3.5;290)
\psline(0,0)(3.5;260)
\psarc(0,0){3.5}{250}{290}
\qdisk(0,0){0.1}
\Epicture
}
\newenvironment{rem}{\begin{trivlist}\item[]{\bf Remark:}\setlength{\parindent}{0pt}}{\end{trivlist}}
\newenvironment{ex}{\begin{trivlist}\item[]{\bf Example:}\setlength{\parindent}{0pt}}{\end{trivlist}}
\newenvironment{thm*}{\begin{trivlist}\item[]{\bf Theorem.}}{\end{trivlist}}
\newenvironment{dfn*}{\begin{trivlist}\item[]{\bf Definition.}}{\end{trivlist}}
\newenvironment{prp*}{\begin{trivlist}\item[]{\bf Proposition.}}{\end{trivlist}}
\newenvironment{lem*}{\begin{trivlist}\item[]{\bf Lemma.}}{\end{trivlist}}
\newenvironment{cor*}{\begin{trivlist}\item[]{\bf Corollary.}}{\end{trivlist}}
\newlength{\fracw}
\newlength{\savearraycolsep}
\newtheorem{thm}{Theorem}[section]
\newtheorem{dfn}[thm]{Definition}
\newtheorem{prp}[thm]{Proposition}
\newtheorem{lem}[thm]{Lemma}
\begin{document}
\title{On complex and symplectic toric stacks}
\author{Andreas Hochenegger and Frederik Witt}
\date{}
\maketitle

\begin{quote}
Toric varieties play an important r\^ole both in symplectic and complex geometry. In symplectic geometry, the construction of a symplectic toric manifold from a smooth polytope is due to Delzant~\cite{del}. In algebraic geometry, there is a more general construction using fans rather than polytopes. However, in case the fan is induced by a smooth polytope Audin~\cite{aud} showed both constructions to give isomorphic projective varieties. For rational but not necessarily smooth polytopes the Delzant construction was refined by Lerman and Tolman~\cite{lt}, leading to symplectic toric orbifolds or more generally, symplectic toric DM stacks~\cite{lm}. We show that the stacks resulting from the Lerman--Tolman construction are isomorphic to the stacks obtained by Borisov et al.~\cite{bcs} in case the stacky fan is induced by a polytope. No originality is claimed (cf.\ also the article by Sakai~\cite{s}). Rather we hope that this text serves as an example driven introduction to symplectic toric geometry for the algebraically minded reader.
\end{quote}
%
%
%
%
%
\section{Delzant's theorem}\label{delzant}
We briefly describe the symplectic construction of a toric variety starting from a rational polytope. Good references are~\cite{aud} and~\cite{gui}. In this section we assume manifolds, tensors, maps between manifolds etc.\ to be differentiable (i.e.\ of class $C^\infty$) unless mentioned otherwise.
\paragraph{Symplectic toric manifolds.}
Let $U$ be a manifold. A {\em symplectic form} for $U$ is a closed, non--degenerate $2$--form $\omega$, that is $d\omega=0$, and the natural map sending a vector field $v\in\mf{X}(U)$ to the $1$--form $v\llcorner\omega=\omega(v,\cdot)\in\Omega^1(U)$ is a linear isomorphism. In particular, $U$ must be even dimensional. We call the pair $(U,\omega)$ a {\em symplectic manifold}. The automorphism group of a symplectic manifold, the group of {\em symplectomorphisms} $\symp(U,\omega)$, consists of diffeomorphisms preserving the symplectic form under pullback. Symplectomorphisms exist in abundance. Indeed, take any smooth function $H\in C^\infty(U)$ and define the associated {\em Hamiltonian vector field} $v_H$ by $\omega(v_H,\cdot)=-dH$. Then for $U$ compact the flow of $v_H$ gives a curve in $\symp(U,\omega)$.

\begin{dfn}\label{hamac}
Let $G$ be a Lie group. An action of $G$ on a symplectic manifold $(U,\omega)$ is {\bf hamiltonian} if
\begin{itemize}
	\item there is a $G$--equivariant map $\mu:U\to\mf{g}^\ast$ from the manifold to the dual of the Lie algebra $\mf{g}$ of $G$ ($G$ acting via the coadjoint representation). We call $\mu$ the {\bf moment map} of the action.
	\item the fundamental vector fields $v^\sharp$ induced by $v\in\mf{g}$ satisfy $v^\sharp\llcorner\omega=-d\langle\mu,v\rangle$ (where $\langle\cdot\,,\cdot\rangle$ denotes evaluation of $\mu\in C^\infty(U,\mf{g}^*)$ on $v\in\mf{g}$).
\end{itemize}
\end{dfn}

\begin{ex}
Consider $\C^d$ with its standard symplectic form $\omega_0=i\sum dz_k\wedge d\bar z_k/2$. Let $T^d=S^1\times\ldots\times S^1=(\R/\Z)^d$ denote the compact (as opposed to algebraic) torus of dimension $d$. Then $t=(t_1,\ldots,t_d)\in T^d$ acts on $z\in\C^d$ via $t.z=(t_1z_1,\ldots,t_dz_d)$. For the standard basis $e_1,\ldots,e_d$ of the Lie algebra $\mf{t}^d\cong\R^d$, the induced fundamental vector fields are $e_k^\sharp(z)=i(z_k\partial_{z_k}-\bar{z}_k\partial_{\bar{z}_k})$, hence $e_k^\sharp\llcorner\omega=-(z_kd\bar{z}_k+\bar{z}_kdz_k)/2=-d|z_k|^2$. Therefore, the action is hamiltonian with moment map
\ben
\mu_0(z)=\tfrac{1}{2}(|z_1|^2,\ldots,|z_d|^2).
\ee
More generally, if $G\subset T^d$ acts on $\C^d$ as a subgroup of $T^d$, then the action is hamiltonian with moment map $\mu_G=\iota^\ast\circ\mu$, where $\iota^\ast$ is the dual of the natural inclusion of Lie algebras $\iota:\mf{g}\hookrightarrow\mf{t}^d$.
\end{ex}

Hamiltonian actions by compact tori are particularly interesting because of the following

\begin{thm}{\bf (Atiyah~\cite{at}/Guillemin--Sternberg~\cite{gs})}
For the hamiltonian action of a compact torus with moment map $\mu$ on a compact, connected symplectic manifold, the set of fixed points of the action is a finite union of submanifolds $C_1,\ldots,C_r$. On each of these submanifolds, $\mu(C_j)\equiv\eta_j$ is constant and the image of $\mu$ is the convex hull of the points $\eta_j$. 
\end{thm}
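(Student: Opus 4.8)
The plan is to treat the three assertions in turn, the first two being local and elementary and the third the global heart of the matter. That the fixed locus is a union of submanifolds follows by linearising the action near a fixed point $p$: by Bochner linearisation together with an equivariant Darboux theorem, a $T$--invariant neighbourhood of $p$ is equivariantly symplectomorphic to a neighbourhood of $0$ in $(T_pM,\omega_p)$, on which $T$ acts linearly and symplectically, so $T_pM\cong\C_{\alpha_1}\oplus\dots\oplus\C_{\alpha_m}$ as a sum of weight spaces with weights $\alpha_k\in\mf{t}^\ast$, and the fixed locus of the linear action is the sum of the weight--zero planes, a linear symplectic subspace. Hence near $p$ the fixed set is a (symplectic) submanifold, and by compactness of $M$ there are only finitely many components $C_1,\dots,C_r$. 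Along $C_j$ every fundamental vector field $v^\sharp$ vanishes, so the moment map relation $v^\sharp\llcorner\omega=-d\langle\mu,v\rangle$ forces $d\langle\mu,v\rangle\equiv 0$ there for all $v\in\mf{t}$; thus $d\mu$ vanishes on the connected set $C_j$ and $\mu\equiv\eta_j$ on it.

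For the third assertion the key point is that for each $\xi\in\mf{t}$ the function $\mu^\xi:=\langle\mu,\xi\rangle$ is Morse--Bott with critical set the fixed locus of the subtorus $T_\xi:=\overline{\exp(\R\xi)}$: in the linear model $\mu^\xi=\mu^\xi(p)+\tfrac12\sum_k\langle\alpha_k,\xi\rangle\,|z_k|^2$, so its Hessian is non--degenerate transverse to the weight--zero planes and each of its negative and positive eigenspaces is a sum of the $2$--dimensional pieces $\C_{\alpha_k}$, hence even--dimensional. One then invokes the Atiyah connectedness lemma: a Morse--Bott function on a compact connected manifold none of whose critical submanifolds has index or coindex $1$ has connected (possibly empty) level sets and a unique local minimum value and local maximum value. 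I would prove simultaneously, by induction on $n=\dim T$, that all fibres of $\mu$ are connected and that $\mu(M)$ is convex. For $n=1$ this is the connectedness lemma applied to $\mu=\mu^{e_1}$, together with the fact that $\mu(M)\subset\R$ is then a closed interval whose endpoints, being critical values, are attained on $\mathrm{Fix}(T)$. For the inductive step one splits off a codimension--one subtorus $T'\subset T$: by induction its partial moment map $\mu'\colon M\to(\mf{t}')^\ast$ has connected fibres, and restricting the remaining component to such a fibre and running the connectedness lemma shows first that the fibres of $\mu$ are connected and then, by considering preimages of affine lines (which in rational directions are fibres of a partial moment map), that $\mu(M)$ meets every line in a connected set, i.e.\ is convex. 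Finally, for each $\xi$ the maximum of $\mu^\xi$ on $M$ is attained on a critical submanifold $C\subseteq\mathrm{Fix}(T_\xi)$, on which $\mu^\xi$ is constant since $\xi^\sharp\equiv 0$ there; the Hamiltonian action of $T/T_\xi$ on the compact symplectic manifold $C$ has a fixed point, which is then fixed by all of $T$, so that constant equals $\langle\eta_j,\xi\rangle$ for some $j$, whence $\max_M\mu^\xi=\max_j\langle\eta_j,\xi\rangle$. Being compact and convex, $\mu(M)$ is the intersection over all $\xi$ of the half--spaces $\{x:\langle x,\xi\rangle\le\max_M\mu^\xi\}$, and this intersection is precisely the convex hull of the $\eta_j$.

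The main obstacle is the induction in the third step. The fibres of a partial moment map need not be smooth, so one cannot directly do Morse theory for the remaining component along them; following Atiyah one works on the dense set of regular values, where the fibre is a smooth $T$--invariant submanifold carrying the relevant reduced symplectic data, applies the connectedness lemma there, and passes to the limit, and similarly approximates irrational line directions by rational ones using compactness of $\mu(M)$. Underpinning everything is the connectedness lemma itself, whose proof is a careful analysis of how the connectedness of sublevel sets changes as one crosses a Morse--Bott critical level when no index--one handles are attached.
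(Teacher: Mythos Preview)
The paper does not prove this theorem at all: it is quoted as a named result of Atiyah and Guillemin--Sternberg, with references~\cite{at} and~\cite{gs}, and used as background for the notion of moment polytope. So there is no ``paper's own proof'' to compare against.

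That said, your sketch is essentially the classical Atiyah argument and is sound at the level of a proof outline. The linearisation step for the structure of the fixed locus, the vanishing of $d\mu$ along each $C_j$, the Morse--Bott property of each $\mu^\xi$ with even indices and coindices, the connectedness lemma, and the induction on $\dim T$ are all the standard ingredients. Your final paragraph correctly flags the genuine technical point: the reduced fibres $\mu'^{-1}(c)$ are only manifolds for regular $c$, so one argues on the regular set and passes to limits, and one handles irrational directions by rational approximation. One small sharpening worth making explicit: in the step ``the Hamiltonian action of $T/T_\xi$ on the compact symplectic manifold $C$ has a fixed point'', you are implicitly using that a component $C$ of $\mathrm{Fix}(T_\xi)$ is a closed symplectic submanifold on which the residual torus acts in Hamiltonian fashion, and that any Hamiltonian torus action on a compact symplectic manifold has a fixed point (e.g.\ the maximum of any component of its moment map). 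With that spelled out, the support--function argument $\max_M\mu^\xi=\max_j\langle\eta_j,\xi\rangle$ indeed identifies the compact convex set $\mu(M)$ with the convex hull of the $\eta_j$.
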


Since the convex hull of a finite set of points in a real vector space is a polytope, one refers to the image of the moment map as the {\em moment polytope}. 

\begin{ex}
In continuation of the previous example, consider the complex projective space $\PP^d\cong S^{2d+1}/S^1$. The $T^{d+1}$--action on $\C^{d+1}$ preserves the unit sphere $S^{2d+1}$ on which $t\in S^1$ acts via $(t,\ldots,t)\in T^{d+1}$. The standard symplectic form on $\C^{d+1}$ descends to the quotient $S^{2d+1}/S^1$ and induces the well--known Fubini--Study form $\omega_{FS}$. We get a hamiltonian $T^d$--action for $(\PP^d,\omega_{FS})$ by sending $(t_1,\ldots,t_d)\in T^d$ to $(1,t_1,\ldots,t_d)\in T^{d+1}$ and using the $T^{d+1}$--action on the sphere. Indeed, if $s:\mf{t}^d\to\mf{t}^{d+1}$ denotes the resulting inclusion at Lie algebra level, then $\mu_{T^d}\circ\pi=s^\ast\circ\mu_0|_{S^{2d+1}}$ (with $\pi:S^{2d+1}\to\PP^d$ the natural projection), that is,
\ben
\mu_{T^d}([z_0:\ldots:z_d])=\tfrac{1}{2\sum_{k=0}^d|z_k|^2}(|z_1|^2,\ldots,|z_d|^2)
\ee
is a moment map for this action. In particular, the moment polytope is the simplex given by the images under $\mu_{T^d}$ of the fixed points $[1:0:\ldots:0],\ldots,[0:0:\ldots:1]$. 
\end{ex}

\begin{dfn}
A {\bf symplectic toric manifold} is a symplectic manifold $(U,\omega)$ of dimension $2d$ together with an effective hamiltonian action by a compact $d$--torus.
\end{dfn}

This is the symplectic counterpart of a complex toric variety (defined at the beginning of Section~\ref{anastack}). Note that for an effective action of $T^l$ we need $l\leq\dim U/2$ (cf.\ for instance Theorem 1.3 in~\cite{gui}) so that $d$ is the maximal dimension. Furthermore, since in this case, the moment map must be a submersion at some point (i.e.\ the differential is surjective at that point), the moment polytope is $d$--dimensional.

\medskip

For a compact torus $T^d$ we denote by $N\cong\Z^d$ the natural lattice inside the Lie algebra $\mf{t}\cong\R^d$. The dual lattice $\Hom(N,\Z)$ is written $M$. Further, let $\Delta\subset\mf{t}^\ast$ be a polytope with $m$ facets (i.e.\ codimension $1$ faces) and open interior (the vertices are {\em not} necessarily lattice points). 

\begin{dfn}
The polytope $\Delta$ will be called {\bf rational} if it can be written 
\be\label{etadata}
\Delta=\bigcap_{j=1}^m\{\alpha\in\mf{t}^\ast\,|\,\langle\alpha,u_j\rangle\geq-\eta_j\in\R\}
\ee
for $u_j\in N$, $j=1,\ldots,m$. In this case, we take the $u_j\in N$ to be primitive and inward pointing. Furthermore, we say that $\Delta$ is {\bf smooth} if for any vertex $w\in\Delta$, the subset of vectors $u_{j_1},\ldots,u_{j_d}$ corresponding to facets meeting at $w$, forms a basis of $N$.
\end{dfn}

For example, the moment polytope of $\PP^d$, or more generally of any other toric symplectic manifold, is smooth. Conversely:

\begin{thm}{\bf(Delzant~\cite{del})}
Any smooth polytope $\Delta$ arises as the moment polytope of a symplectic toric manifold $U_\Delta$. Furthermore, two symplectic toric manifolds are equivariantly symplectomorphic \iff their associated moment polytopes can be mapped to each other by translation.
\end{thm}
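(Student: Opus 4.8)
The plan is to carry out the symplectic reduction construction explicitly and then verify its two desired properties. Given a smooth polytope $\Delta$ written as in~\eqref{etadata} with inward-pointing primitive normals $u_1,\ldots,u_m\in N$, I would first consider the surjection $\pi:\Z^m\to N$ sending the $j$-th standard basis vector to $u_j$. Smoothness of $\Delta$ guarantees, vertex by vertex, that $\pi$ is indeed surjective, so its kernel $\mf{n}$ is a sublattice of rank $m-d$; exponentiating gives a short exact sequence of compact tori $1\to K\to T^m\to T^d\to 1$ with $K\cong T^{m-d}$. Now let $T^m$ act on $\C^m$ in the standard hamiltonian fashion with moment map $\mu_0(z)=\tfrac12(|z_1|^2,\ldots,|z_m|^2)$ as in the first example, restrict to $K$ to obtain $\mu_K=\iota^\ast\circ\mu_0$, and set $U_\Delta:=\mu_K^{-1}(c)/K$ for the constant $c\in\mf{k}^\ast$ determined by the $\eta_j$ (concretely $c=\iota^\ast(\eta_1,\ldots,\eta_m)$). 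The first task is to check that $c$ is a regular value of $\mu_K$ and that $K$ acts freely on $\mu_K^{-1}(c)$; both follow from the smoothness hypothesis, by stratifying $\mu_K^{-1}(c)$ according to which coordinates $z_j$ vanish and matching the vanishing set at a point with the face of $\Delta$ it will map to. Marsden--Weinstein reduction then endows $U_\Delta$ with a symplectic form $\omega_\Delta$, the residual $T^d=T^m/K$-action is hamiltonian, and one reads off that its moment map has image exactly $\Delta$ and is effective of the right dimension, so $(U_\Delta,\omega_\Delta)$ is a symplectic toric manifold with moment polytope $\Delta$.

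For the uniqueness half I would argue as follows. Translation of $\Delta$ clearly does not change the equivariant symplectomorphism type, since it only shifts the moment map by a constant in $(\mf{t}^d)^\ast$, which is the freedom inherent in the definition of a moment map for a torus action. Conversely, suppose $(U,\omega)$ is any symplectic toric manifold with moment polytope $\Delta'$ a translate of $\Delta$; after translating we may assume $\Delta'=\Delta$. The core of the argument is a local normal form: near a fixed point $p$ (mapping to a vertex $w$ of $\Delta$) the equivariant Darboux theorem, together with smoothness of $\Delta$ at $w$, shows that an invariant neighbourhood of $p$ is equivariantly symplectomorphic to a neighbourhood of the corresponding point in $U_\Delta$, with the moment maps agreeing. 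One then propagates this identification over the preimages of open faces: over the interior of $\Delta$ the moment map is a principal $T^d$-bundle which, by the convexity/connectedness results underlying the Atiyah--Guillemin--Sternberg theorem, is trivial and carries the action-angle coordinates making it standard; the agreement on lower-dimensional strata glues these local models together. A partition-of-unity / Moser-type argument on the difference of the two symplectic forms (which agree near the strata and are cohomologous because $\Delta$ is contractible) then upgrades the diffeomorphism to an equivariant symplectomorphism.

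The main obstacle is the uniqueness direction, specifically the passage from the local equivariant normal forms near the fixed strata to a single global equivariant symplectomorphism: one must check that the action-angle coordinates chosen over different faces are mutually compatible and that the Moser trick can be run $T^d$-equivariantly without destroying the matching already arranged near the boundary strata. The existence direction is essentially bookkeeping once the freeness and regularity claims are in place, and those in turn are a direct, if slightly tedious, consequence of the smoothness of the polytope checked one vertex at a time.
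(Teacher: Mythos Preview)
The paper does not prove this theorem at all: it is stated as a classical result with a citation to Delzant's original article~\cite{del}, followed only by a remark on the K\"ahler structure and the volume. There is therefore no ``paper's own proof'' to compare against.

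That said, your outline is the standard argument and is in line with what the paper later sketches in the orbifold/stacky setting (cf.\ the example after Theorem~\ref{lertol} and the paragraph ``Examples'' preceding Theorem~\ref{symtor}). The existence half---building $K=\ker\bar\beta$ from the map $e_j\mapsto u_j$, checking that $c=\iota^\ast(\eta_1,\ldots,\eta_m)$ is regular and that $K$ acts freely using the smoothness hypothesis vertex by vertex, and then reading off the residual $T^d$-moment map---is exactly the Delzant construction as presented in~\cite{del},~\cite{gui} and~\cite{aud}. Your identification of the uniqueness direction as the delicate part is also accurate; what you describe (local equivariant normal forms near fixed strata, action--angle coordinates over the interior, an equivariant Moser deformation) is essentially Delzant's original strategy, though the details of the gluing and of running Moser equivariantly while preserving the boundary normal forms are substantial and would need to be filled in to make this a proof rather than a proof sketch.
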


\begin{rem}
For a given polytope $\Delta$ it follows from Delzant's construction that $U_\Delta$ admits a natural compatible complex structure and is therefore K\"ahler. In fact, $U_\Delta$ is biholomorphic to any complex projective toric variety associated with a polytope with vertices in $N$ and inducing the same normal fan as $\Delta$. Furthermore, the euclidean volume of $U_\Delta$ is proportional to the euclidean volume of $\Delta$ (cf.\ for instance Theorem 2.10 in~\cite{gui}).
\end{rem}
\paragraph{The Lerman--Tolman theorem.}
From the view point of toric geometry it is natural to extend Delzant's theorem to the case of rational polytopes. Namely, any rational polytope (with vertices in $N$) gives a complex projective toric variety which is an {\em orbifold}, i.e.\ has at worst quotient singularities. Any effective orbifold is of the form $U/G$, where $U$ is a manifold and $G$ a compact connected Lie group acting  effectively and locally freely on $U$, that is, with finite isotropy groups (cf.\ Corollary 2.16 and Theorem 2.19 in~\cite{mm}; for the noneffective case, see~\cite{hm}).

\medskip

On an orbifold $X$, differential forms, vector fields etc.\ can be defined either by using the isomorphism with $U/G$ or in terms of an {\em orbifold atlas}. This is a collection $(U_\alpha,G_\alpha,f_\alpha:U_\alpha\to X)$ \st $G_\alpha$ is a discrete group acting effectively on the manifold $U_\alpha$ and $f_\alpha$ descends to a homeomorphism $U_\alpha/G_\alpha\to X$ onto an open subset $V_\alpha\subset X$. These data are required to satisfy the following conditions:
\begin{itemize}
	\item The collection $\{V_\alpha\}$ is an open covering of $X$.
	\item If $x_\alpha\in U_\alpha$ and $x_\beta\in U_\beta$ get mapped to the same point in $X$, i.e.\ $f_\alpha(x_\alpha)=f_\beta(x_\beta)$, then there exists a germ of a diffeomorphism $f_{\alpha\beta}$ from some connected open neighbourhood of $x_\alpha$ to an open neighbourhood of $x_\beta$ \st $f_\beta\circ f_{\beta\alpha}=f_\alpha$.
\end{itemize}
A differential form of degree $k$ on an orbifold is then given by a collection of $\lambda_\alpha\in\Omega^k(U_\alpha)$ which agree on overlaps and which are invariant under the induced action of $G_\alpha$. Similarly, one can define vector fields. Hamiltonian group actions are more delicate to define (cf.~\cite{hs}), but nevertheless there is a natural notion of a {\em symplectic toric orbifold} (see also Definition~\ref{symtoricstack} of a symplectic toric stack).

\medskip

Now with each point $y\in V_\alpha\subset X$ we can associate the isotropy group of the $G_\alpha$--orbit $f_\alpha^{-1}(y)$, which is well--defined up to conjugation. In the case of a symplectic toric orbifold for instance, there exists an integer $n_F$ for any each open facet $F$ (i.e.\ the relative interior of the facet $\bar F$) in $\Delta$ such that the isotropy group of any $y$ in the preimage of $F$ under the moment map is $\Z/n_F\Z$. Attaching this integer to the open facet as an additional datum associates a {\em labelled} polytope $\underline\Delta$ with any symplectic toric orbifold. Two such labelled polytopes are {\em isomorphic} if they differ only by a translation \st the corresponding facets carry the same labels.

\begin{thm}{\bf(Lerman--Tolman~\cite{lt})}\label{lertol}
There is a 1--1 correspondence between isomorphism classes of labelled rational polytopes and symplectic toric orbifolds up to equivariant symplectomorphism. 
\end{thm}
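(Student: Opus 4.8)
The plan is to establish the two directions of the correspondence separately, following the pattern of the proof of Delzant's theorem; the only difference is that the symplectic reduction now produces an orbifold rather than a manifold.

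\emph{From polytopes to orbifolds.} Given a labelled rational polytope $\underline\Delta$ with primitive inward normals $u_1,\dots,u_m\in N$, labels $n_1,\dots,n_m\in\N$, and data $\eta_j$ as in~\eqref{etadata}, I would mimic the Delzant reduction. Let $\pi\colon\R^m\to\mf t$ be the linear map sending the $j$-th standard basis vector to $n_ju_j$; since $\Delta$ is $d$-dimensional this map is surjective. Put $\mf k=\ker\pi$ and let $K\subset T^m$ be the (in general disconnected) subgroup with Lie algebra $\mf k$, so that $1\to K\to T^m\to T^d\to 1$ with $T^d:=T^m/K$. Restrict the standard hamiltonian $T^m$-action on $(\C^m,\omega_0)$, with moment map $\mu_0(z)=\tfrac12(|z_1|^2,\dots,|z_m|^2)$, to $K$, whose moment map is then $\mu_K=\iota^\ast\circ\mu_0$ for $\iota\colon\mf k\hookrightarrow\R^m$. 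Choosing the level $c\in\mf k^\ast$ dual to $(\eta_1,\dots,\eta_m)$, set $U_{\underline\Delta}:=\mu_K^{-1}(c)/K$. One then checks: (i) rationality together with the condition on the normals at each vertex forces $K$ to act locally freely with finite stabilizers on $\mu_K^{-1}(c)$, so $U_{\underline\Delta}$ is a symplectic orbifold with the reduced form; (ii) the residual $T^d$-action is effective and hamiltonian and its moment map has image $\Delta$; (iii) the isotropy along the stratum over the open facet $F_j$ is $\Z/n_j\Z$, which is a direct computation of $K\cap S^1_j$ using that $u_j$ is primitive. Hence the labelled moment polytope of $U_{\underline\Delta}$ is $\underline\Delta$, and the construction is manifestly translation-equivariant, so it is well defined on isomorphism classes.

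\emph{From orbifolds to polytopes, and injectivity.} A symplectic toric orbifold $(X,\omega,\mu)$ has a well-defined labelled polytope by the Atiyah/Guillemin--Sternberg theorem together with the analysis of isotropy along moment-map fibres described before the statement, and an equivariant symplectomorphism visibly carries one labelled polytope to another by translation; so the assignment descends to a well-defined map on isomorphism classes, and it remains to show it is injective, i.e.\ that any $X$ with labelled polytope $\underline\Delta$ is equivariantly symplectomorphic to the model $U_{\underline\Delta}$. I would do this in two steps. First, an equivariant local normal form: near any $x\in X$ the $T^d$-action, the symplectic form and the moment map are modelled, in an orbifold chart, on a product determined only by which faces of $\Delta$ contain $\mu(x)$ and by their labels (over the interior one gets action--angle coordinates $T^d\times(\text{open set})$ with the standard form; over a codimension-$k$ stratum, $\C^k\times T^{d-k}\times(\text{open set})$ modulo the finite abelian group read off from the labels). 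Second, since $X$ and $U_{\underline\Delta}$ carry the same local models over corresponding pieces of $\Delta$, patch the local equivariant symplectomorphisms into a global equivariant diffeomorphism $\varphi\colon X\to U_{\underline\Delta}$ with $\mu_{\underline\Delta}\circ\varphi=\mu$; then $\varphi^\ast\omega_{\underline\Delta}$ and $\omega$ are invariant symplectic forms on $X$ with the same moment map, so their difference is closed, invariant and horizontal, hence exact with an invariant horizontal primitive, and an equivariant Moser deformation fixing $\mu$ completes the argument.

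\emph{Main obstacle.} The construction direction is essentially Delzant's recipe with the sublattice generated by $\{n_ju_j\}$ in place of the one generated by $\{u_j\}$, and the only real work there is the local freeness of the $K$-action and the stabilizer computation (iii). The delicate part is the uniqueness direction in the orbifold category: one must set up the equivariant Darboux/Weinstein-type local normal form carefully in orbifold charts, track the finite isotropy groups and check that the transition maps can be chosen equivariant, and then run the Moser argument equivariantly on an orbifold, which requires some care with $T^d$-equivariant (orbifold) de Rham cohomology and with averaging and partitions of unity compatible with the stratification of $X$ by the faces of $\Delta$. This is exactly where one must appeal to, or reprove, the technical core of~\cite{lt}.
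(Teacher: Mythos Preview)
The paper does not contain a proof of this theorem. Theorem~\ref{lertol} is stated with attribution to Lerman--Tolman~\cite{lt} and is immediately followed by an example; the paper merely quotes the result as background and later (in Section~\ref{sympstack}) outlines the construction of the symplectic toric DM stack from a labelled polytope without proving the classification statement. So there is nothing in the paper to compare your proposal against.

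For what it is worth, your sketch is a faithful outline of the Lerman--Tolman argument: the construction direction via symplectic reduction with the map $e_j\mapsto n_ju_j$ is exactly the recipe the paper reproduces in the paragraph after Theorem~\ref{symtor}, and your uniqueness direction (local normal form plus equivariant Moser) is the standard route in~\cite{lt}. Your own assessment of where the real work lies --- the orbifold local normal form and the equivariant Moser argument in the orbifold category --- is accurate, and that is precisely the technical content one must import from~\cite{lt}.
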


\begin{ex}
Consider the polytope $\underline\Delta\subset\R^\ast$ given by the interval $[0,1]$ with non--trivial labels $n_1=k$ and $n_2=1$ at the facets $0$ and $1$, whence $\eta_1=0$ and $\eta_2=1$ in~\eqref{etadata}. Let $e_{1,2}$ be the standard basis of $\R^2$ and define $\beta:\R^2\to\R$ by $\beta(e_j)=n_jv_j$. This fits into the exact sequence
\ben
0\to\widehat{\mf{g}}\stackrel{\iota}{\longrightarrow}\R^2\stackrel{\beta}{\longrightarrow}\R\to0
\ee
which descends to the sequence on torus level
\ben
\xymatrix@R=1pt{
0 \ar[r] & \widehat G \cong\quotient{\R}{\Z} \ar[r]^{\bar\iota} & (\quotient{\R}{\Z})^2 \ar[r]^{\bar\beta} & \quotient{\R}{\Z} \ar[r] & 0.\\
& t \ar@{|->}[r] & (t,kt)
}
\ee
Now in general one can show that $\xi=\iota^*(\eta_1,\eta_2)$ is a regular value for the induced moment map $\mu_{\widehat G}=\iota^*\mu_0$. In our case, $\mu_{\widehat G}^{-1}(1)\cong S^3$ on which $\widehat G\cong S^1$ acts via the inclusion $\bar\iota$ (written multiplicatively) $t\in S^1\mapsto (t,t^k)\in T^2$. By the {\em symplectic reduction principle} (see for instance Section 23 in~\cite{cds} and also the example after Definition~\ref{sympform}), $S^3/S^1$ is a symplectic orbifold which inherits a toric structure from the action of $T^2/\widehat G$. In particular, the unlabelled polytope (where $k=1$) gives $U_\Delta=\PP^1$ with the Fubini--Study form. The general construction (without proof) will be outlined in Section~\ref{sympstack}. Note that the point corresponding to $(0,1)$ in $S^3/S^1$ is the only one with non--trivial stabiliser group (which is isomorphic to the group of k--th roots of unity $\Z_k\subset S^1$). As an orbifold, $S^3/S^1$ is the so--called $k$--{\bf conehead} (an explicit orbifold atlas will be  exhibited in the next section).
\end{ex}

In the symplectic category labelled polytopes thus occur rather naturally. As we have mentioned before and seen in the previous example, the labels give rise to codimension $1$ singularities. This cannot happen for algebraic toric varieties coming from a fan -- they are necessarily {\em normal} and have thus singularities of codimension at least $2$. This is where the idea of a {\em stacky fan} -- due to Borisov et al~\cite{bcs} -- comes in. In the next few sections we will explain how theses concepts are related.
%
%
%
%
%
\section{Lie groupoids and stacks}\label{groupstack}
In order to compare the results of~\cite{lt} and~\cite{bcs} we need to pass from orbifolds to {\em (differentiable) stacks}. These can be thought of either as categories fibred into groupoids~\cite{bx} or as pseudofunctors from the category of manifolds to the category of groupoids~\cite{h} which in both cases satisfy additional gluing conditions. We stick to the former approach, but to keep the exposition elementary we will not give a complete definition of a stack. Instead, we rather emphasise their description by means of (equivalence classes of) {\em Lie groupoids} using the dictionary established in~\cite{bx}. 
\paragraph{Stacks.}
In the following we will consider the category $\mc{M}$ of (differentiable) manifolds (not necessarily Hausdorff) with (differentiable) maps as morphisms. Good references for this section are the aforementioned texts by Behrend and Xu~\cite{bx} and Heinloth~\cite{h}. A short introduction to the idea of a stack which is sufficient for our purposes is given in~\cite{f}. 

\begin{dfn}\label{groupoid}
{\rm (i)} A category is called a {\bf groupoid} if every morphism is invertible.

{\rm (ii)} A {\bf category fibred in groupoids (CFG)} over $\mc{M}$, written $\mc{X}\to\mc{M}$, is a category $\mc{X}$ together with a functor $\pi:\mc{X}\to\mc{M}$ satisfying the following property. For every morphism $U\to V$ in $\mc{M}$ and every object $y$ of $\mc{X}$ lying over $V$ (i.e.\ $\pi(y)=V$), there exists a morphism $f:x\to y$ lying over $U\to V$ (i.e.\ $\pi(f)=U\to V$) which is unique up to unique isomorphism. This means that for any other morphism $\tilde f:\tilde x\to y$ lying over $U\to V$ there exists a unique isomorphism $\alpha:\tilde x\to x$ lying over the identity of $U$ \st $\tilde f=f\circ\alpha$. A {\bf morphism} between CFGs $\mc{X}$ and $\mc{X}'$ is a functor $\mc{X}\to\mc{X}'$ which commutes with the projections to $\mc{M}$. An {\bf isomorphism} is a morphism $\mc{X}\to\mc{X}'$ which is an equivalence of categories.
\end{dfn}

\begin{rem}
(i) One often refers to the $x$ of the definition as the {\em pull--back} of $y$ via $U\to V$. It is unique up to unique isomorphism. It follows that for a manifold $U$, the subcategory $\mc{X}(U)$ of $\mc{X}$ consisting of objects lying over $U$ and morphisms lying over the identity, is a groupoid. We call $\mc{X}(U)$ the {\em fibre} of $\pi:\mc{X}\to\mc{M}$.

(ii) Similarly one could consider CGFs over base categories other than $\mc{M}$ such as topological spaces, complex (analytic) spaces \textellipsis. The collection of CGFs over some base category defines itself a $2$--category~\cite{g}.
\end{rem}

\begin{ex}
(i) Let $U$ be a manifold. We define $\underline U$ to be the category whose objects are maps $X\to U$ between manifolds. A morphism between $f:X\to U$ and $g:Y\to U$ is a map $h:X\to Y$ \st $f=g\circ h$. The projection $\pi:\underline U\to\mc{M}$ is given by $\pi(X\to U)=X$ so that $\underline U(X)=C^\infty(X,U)$. The pull--back of $f:Y\to U$ via $g:X\to Y$ is obtained by the usual pull--back of maps $g^*f=f\circ g:X\to U$.

(ii) Let $G$ be a Lie group. We define the stack $BG$ to be the category consisting of objects $(U,P)$ where $p_P:P\to U$ is a principal $G$--fibre over $U$. Morphisms $(U,P)\to (V,Q)$ consist of pair of maps $(f:U\to V,\hat f:P\to Q)$ \st $\hat f$ is $G$--equivariant map and $p_Q\circ\hat f=f\circ p_P$. The projection $\pi:BG\to\mc{M}$ is defined by $(U,P)\mapsto U$. The fibre $BG(U)$ is thus the subcategory of principal $G$--fibre bundles over $U$ with bundle maps as morphisms.

(iii) More generally, let $G$ act on a manifold $U$. We define a CFG $[U/G]$ with fibres
\ben
[U/G](X):=\{(P\to X,u:P\to U)\,|\,P\in BG(X),\,u\mbox{ is $G$--equivariant}\}
\ee
by taking the same morphisms as in (ii) subject to the additional condition that $\hat f$ must form a commutative triangle with the $G$--equivariant maps to $U$. We then recover the previous examples. Indeed, if the action of $G$ is proper and free, then $U/G$ is again a manifold so that any pair $(P\to X,u:P\to U)$ in $[U/G]$ is determined by $\tilde u:P/G\cong X\to U/G$, whence $[U/G]\cong\underline{U/G}$. Secondly, taking a one point space $U=\ast$, then $(P\to X,u:P\to\ast)\in[\ast/G](X)$ is determined by $P\to X$, that is, $[U/G]\cong BG$.
\end{ex}

A {\em stack} is a CFG $\mc{X}\to\mc{M}$ which satisfies certain gluing conditions. The previous examples of CFGs all define stacks (see~\cite{bx} or~\cite{h}). A(n) {\em (iso)morphism between stacks} $\mc{X}$ and $\mc{X}'$ is a(n) (iso)morphism between CFGs. If $U$ is a manifold, then $\mathrm{Mor}_\mc{M}(\underline U,\mc{X})\cong\mc{X}(U)$ where a functor $F:\underline{U}\to\mc{X}$ corresponds to $u=F(\Id_U)\in\mc{X}(U)$ (see Lemma 1.3 in~\cite{h}). 

\begin{ex} 
(i) A morphism $F:\underline U\to\underline V$ is induced by a map $f:U\to V$. Any $g:X\to U$ is mapped to $f\circ g=g^\ast f\in\underline V(X)$.

(ii) A morphism $\underline U\to BG$ is given by a principal fibre bundle $P\to U$. Any $g:X\to U$ is mapped to the pull--back bundle $g^\ast P\in BG(X)$.

(iii) A morphism $\underline U\to [V/G]$ is given by a principal $G$--fibre bundle $P\to U$ together with a $G$--equivariant map $u:P\to V$.  Any $g:X\to U$ is mapped to $(g^\ast P,\hat g^*u)\in [V/G](X)$, where $\hat g$ is the induced bundle map $g^\ast P\to P$ covering $g:X\to U$.
\end{ex}

To define a {\em differentiable} stack we need one more notion.

\begin{dfn}
Given three CFGs $\mc{X}$, $\mc{Y}$, and $\mc{Z}$ over $\mc{M}$ and morphisms $f:\mc{X}\to\mc{Y}$ and $g:\mc{Y}\to\mc{Z}$, the {\bf fibre product} $\mc{X}\times_\mc{Z}\mc{Y}$ is defined to be the following category. Objects are triples
$(x,y,\alpha)$, where $x$ and $y$ are objects in $\mc{X}$ and $\mc{Y}$ lying over the same object $U$ in $\mc{M}$ and $\alpha:f(x)\to g(y)$ is an isomorphism in $\mc{Z}$ lying over the identity of $U$. A morphism from
$(x',y',\alpha')$ to $(x,y,\alpha)$ is given by morphisms $a:x'\to x$ in $X$ and $b:y'\to y$ in $Y$ lying over the
same morphism $U'\to U$ in $\mc{M}$ such that $\alpha\circ f(a)=g(b)\circ\alpha':f(\alpha')\to g(y)$. 
\end{dfn}

With the obvious projection $\mc{X}\times_\mc{Z}\mc{Y}\to\mc{M}$, the fibre product becomes itself a CFG over $\mc{M}$.
 
\begin{dfn}\label{repstack}
{\rm(i)} A stack $\mc{X}$ is {\bf representable} if it is isomorphic to a stack $\underline U$ for some manifold $U$. 

{\rm(ii)} A stack $\mc{X}$ is {\bf differentiable} if there exists a morphism $\underline X\to\mc{X}$ \st for any morphism $\underline U\to\mc{X}$, the resulting fibre product $\underline X\times_\mc{X}\underline U$ is representable and the natural map between manifolds induced by the morphism $\underline X\times_\mc{X}\underline U\to\underline U$ is a surjective submersion. The morphism $\underline X\to\mc{X}$ is said to be an {\bf atlas} of $\mc{X}$.
\end{dfn}

\begin{rem}
In terms of algebraic geometry, a submersion is essentially a smooth map. Loosely speaking then, a representable morphism is a morphism with  differentiable fibres.
\end{rem}

\begin{ex}
(i) Let $\underline U\to\underline W$ and $\underline V\to\underline W$ be morphisms induced by submersions $U\to W$ and $V\to W$ (this implies in particular that $U\times_WV$ is again a manifold). As a consequence of the universal property of the (set--theoretic) fibre product, $\underline U\times_{\underline W}\underline V$ is isomorphic to $\underline{U\times_WV}$. Since the induced map $U\times_WV\to V$ is clearly a surjective submersion, the stack $\underline U$ is differentiable. An atlas is provided by $\Id:U\to U$. Any manifold can therefore be considered as a differentiable stack in a natural way. We sometimes simply write $U$ for $\underline U$ if there is no risk of confusion.

(ii) The morphism $\ast\to BG$ represented by $\ast\times G\in BG(\ast)$ is an atlas for $BG$. Indeed, consider a morphism $\underline V\to BG$ associated with $P\to V$ in $BG(V)$. Then
\begin{eqnarray*}
(\ast\times_{BG}\underline V)(X) & = & \{(f:X\to \ast,g:X\to V,\alpha:X\times G\cong g^*P\}\\
& \cong & \{(g:X\to V,\sigma:X\to g^*P)\,|\,p_{g^\ast P}\circ\sigma=\Id_X\}\\
& \cong & C^\infty(X,P)\\
& = &\underline P(X). 
\end{eqnarray*}

(iii) Finally, consider an action $\bullet:U\times G\to G$ of the Lie group $G$ on $U$. An atlas of $[U/G]$, the so--called {\bf quotient stack} of $U$ and $G$, is provided by the morphism $\underline U\to[U/G]$ corresponding to $(U\times G\to U,\bullet:U\times G\to U)$. Indeed, a calculation similar to (ii) shows that for a morphism $\underline V\to[U/G]$ represented by $(P\to V, u:P\to U)$, one has $\underline U\times_{[U/G]}\underline V\cong\underline P$.
\end{ex}

\begin{rem}
Differentiable stacks form a full sub--$2$--category of the $2$--category of CFGs over $\mc{M}$ consisting of differentiable stacks.
\end{rem}
\paragraph{Lie Groupoids.}
Given an atlas $x:\underline X\to\mc{X}$, we can consider the (representable) fibre product
\ben
\mathrm{Iso}(x):=\underline X\times_\mc{X}\underline X
\ee
together with its two canonical morphisms to $\underline X$. Its inherent structure can be axiomatised as follows.

\begin{dfn}
{\rm(i)} Let $\mc{C}$ be a category. A {\bf groupoid object} in $\mc{C}$ or {\bf groupoid} for short consists of two manifolds $R$ and $U$ and five structure maps, $s:R\to U$ (the {\em source map}), $t:R\to U$ (the {\em target map}), $i:R\to R$ (the {\em inverse map}), $1:U\to R$ (the {\em unit map}) and $m:R\times_UR=\{(g,h)\in R\times R\,|\,s(g)=t(h)\}\to R$ (the {\em multiplication map}). We usually write $i(g)=g^{-1}$, $1(x)=e_x$ and $m(g,h)=gh$. For any $k$, $h$ and $g$ in $R$ these maps are required to satisfy
\begin{itemize}
	\item $s(gh)=s(h)$, $t(gh)=t(g)$,
	\item $(gh)k=g(hk)$ whenever defined,
	\item $e_{t(g)}g=g=ge_{s(g)}$,
	\item $s(g^{-1})=t(g)$, $t(g^{-1})=s(g)$, $g^{-1}g=e_{s(g)}$, $gg^{-1}=e_{t(g)}$.
\end{itemize}

{\rm(ii)} A {\bf Lie groupoid} is a groupoid object in $\mc{M}$ where $s$ (and thus $t$) is a submersion. A {\bf morphism} between two Lie groupoids $R\rightrightarrows U$ and $S\rightrightarrows V$  is a differentiable functor which preserves the groupoid structure. More concretely, it is a pair of smooth maps $\Phi:R\to S$ and $\phi:U\to V$ compatible with the structure maps, i.e.\ for all $g,g'\in R$ and $x\in U$ we have $\phi(s(g))=s(\Phi(g))$, $\phi(t(g))=t(\Phi(g))$, $\Phi(e_x)=e_{\phi(x)}$ and $\Phi(gg')=\Phi(g)\Phi(g')$ whenever this makes sense. We denote this morphism by $(\Phi,\phi)$.
\end{dfn}

\begin{rem}
(i) The condition that $s$ is a submersion implies in particular that $R\times_UR$ is again a manifold.

(ii) If $\mc{G}$ is a small category, then $\mc{G}$ is a groupoid in the sense of Definition~\ref{groupoid} (i) \iff it is a groupoid object for the category of sets (with $s(U\to V)=U$, $t(U\to V)=V$, $i$ taking a morphism to it inverse etc.).
\end{rem}

Schematically we can write a Lie groupoid as
\be\label{groupseq}
R\times_UR\stackrel{m}{\to}R\stackrel{i}{\to}R\begin{array}{c}{\mbox{\scriptsize s}}\\[-5pt]\rightrightarrows\\[-5pt]{\mbox{\scriptsize t}}\end{array}U\stackrel{1}{\to}R.
\ee
In general we simply write $R\rightrightarrows U$ for a Lie groupoid as given by~\eqref{groupseq}. 

\begin{dfn}\label{isogroupoid}
Let $R\rightrightarrows U$ be a Lie groupoid.

{\rm (i)} On $U$ consider the equivalence relation $x\sim y$ \iff there exists $g\in R$ with $s(g)=x$ and $t(g)=y$. The quotient $U/\sim$ is called the {\bf space of orbits} or {\bf coarse moduli space} of the Lie groupoid.

{\rm(ii)} The {\bf isotropy group} of $x\in U$ is the set $R_x:=s^{-1}(x)\cap t^{-1}(x)$ (this is indeed a group \wrt the natural group structure induced by $m$). 
\end{dfn}

\begin{ex}
(i) Every manifold $U$ defines the {\em unit groupoid} $U\rightrightarrows U$ with $s$, $t$, $i$ and $1$ the identity and $p\cdot p=p$ for $p\in U$. The isotropy groups are $U_p=\{p\}$ and the coarse moduli space is $U$. More generally, consider a submersion $X\to U$. Then $X\times_UX$ defines a Lie groupoid with $s$ and $t$ the projections, $1$ the diagonal, the inverse interchanging the two factors and multiplication sending $(x,y)$ and $(y,z)$ to $(x,z)$.

(ii) Every Lie group $G$ can be regarded as a Lie groupoid $G\rightrightarrows\ast$ (with $\ast$ denoting the one point space), where $m$ is usual multiplication, $1(\ast)=e_G$ and $i$ the map taking a group element to its inverse. It follows that $G_\ast=G$ while the coarse moduli space is $\ast$.

(iii) Every (left) $G$--space $U$ gives the {\em translation groupoid} $G\times U\rightrightarrows U$. Here, $s(g,u)=u$, $t(g,u)=gu$, $(g,hx)\cdot(h,x)=(gh,x)$, $i(g,x)=(g^{-1},gx)$ and $1(x)=(e_G,x)$. The isotropy group of $x$ is the stabiliser under the action, i.e.\ the set of pairs $(g,x)$ \st $gx=x$. Further the coarse moduli space is just the space of orbits $U/G$. 

(iv) Let $x:\underline X\to\mc{X}$ be an atlas of a differentiable stack. Then $\mathrm{Iso}(x)\rightrightarrows X$ defines a Lie groupoid. Indeed, $\mathrm{Iso}(x)$ consists of triples $(f:U\to X,g:U\to X,\varphi:x(f)\cong x(g))$ and the canonical projections taking such a triple to $f$ and $g$ respectively define the source and target maps. Multiplication with $(f',g',\psi:x(f')\cong x(g'))$ is defined by $(f,g',\psi\circ\varphi:x(f)\cong x(g'))$. Since $x$ induces a surjective submersion, $\mathrm{Iso}(x)$ can be given a differentiable structure.
\end{ex}

\begin{dfn}
{\rm (i)} A Lie groupoid $R\rightrightarrows U$ is {\bf proper} if the map $(s,t):R\to U\times U$ is proper. 

{\rm (ii)} A Lie groupoid $R\rightrightarrows U$ is {\bf \'etale} if $\dim R=\dim U$, that is, $s$ and $t$ are local diffeomorphisms.
\end{dfn}

Proper \'etale Lie groupoids arise from effective orbifolds as defined in Section~\ref{delzant}. Indeed, let $(U_\alpha,G_\alpha,f_\alpha)$ be an orbifold atlas for $X$. Put $U=\sqcup U_\alpha$ and let $R$ be the set of triples $(x,y,f)$ \st $x$ and $y$ get mapped to the same point in $X$ and $f$ is a germ of a diffeomorphism mapping $x$ to $y$. Then $s(x,y,f)=x$, $t(x,y,f)=y$, $i(x,y,f)=(y,x,f^{-1})$, $1(x_\alpha)=(x_\alpha,x_\alpha,\id_{U_\alpha})$ and $(y,z,f)(x,y,g)=(x,z,f\circ g)$. Moreover, the sheaf topology on $R$ turns $s$ and $t$ into local homeomorphisms which induce a differentiable structure on $R$ for which $s$ and $t$ become local diffeomorphisms. Further, the resulting Lie groupoid $R\rightrightarrows U$ is proper by Proposition 5.29 in~\cite{mm}. Note that the isotropy group of a point $x\in U$ (in the sense of Section~\ref{delzant}) is just $R_x$ as given in Definition~\ref{isogroupoid}. Hence the isotropy groups are discrete and Proposition 5.20 in~\cite{mm} implies that $R\rightrightarrows U$ is also \'etale. 

\begin{ex}
Take $\PP^1=\C\cup\{\infty\}$ and remove a disk $D$ around $\infty$. We obtain the $k$--conehead encountered above by gluing in the cone $D/\Z_k$ of angle $2\pi/k$, see Figure~\ref{fig:narrenkappe}. The resulting space is still homeomorphic to $\PP^1$. An orbifold atlas is given by $(U_0=\C,\{e\},f_0(z)=[z:1])$ and $(U_1=\C,\Z_k,f_1(z)=[1:z^k])$. Indeed, $z$, $w\in U_1$ get mapped to the same point \iff $w=e^{2\pi l/k}z$ for some $l=0,\ldots,k-1$ so that $e^{2\pi l/k}$ induces the required germ of diffeomorphisms. On the other hand, if $z\in U_0$ and $w\in U_1$ get mapped to the same point, a germ is induced by $f_{01}(w)=w^{-k}$.
\end{ex}

\begin{figure}
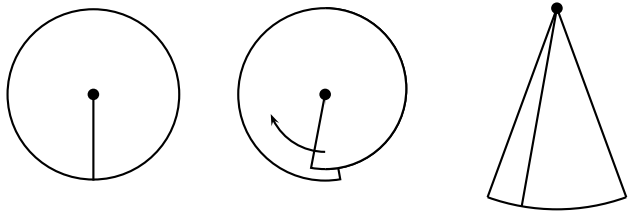

\narrenkappe
\caption{A cone of angle $2\pi/k$}
\label{fig:narrenkappe}
\end{figure}
\paragraph{Morita equivalence.}
An atlas $x:\underline X\to\mc{X}$ gives rise, as we have seen, to a Lie groupoid $\mathrm{Iso}(x)\rightrightarrows X$. Conversely, one can associate with a Lie groupoid $R\rightrightarrows U$ a differentiable stack which carries an atlas giving back $R\rightrightarrows U$ up to isomorphism (see for instance~\cite{bx}). Of course, different Lie groupoids can give rise to isomorphic stacks in the same way two different atlases of a topological manifold can give rise to the same differentiable structure. This structure will be indeed the same if we can pass to a common refinement. We will formalise a similar concept for Lie groupoids now.

\begin{dfn}
{\rm(i)} A {\bf Morita morphism} is a Lie groupoid morphism $(\Phi,\phi)$ from $R\rightrightarrows U$ to $S\rightrightarrows V$ which satisfies the following two properties: 
\begin{itemize}
	\item The diagram
\ben
\xymatrix{
R\ar[rr]^{(s,t)} \ar[d]_{\Phi} & & U\times U\ar[d]^{\phi\times\phi}\\
S\ar[rr]^{(s,t)} & & V\times V
}
\ee
is cartesian, i.e.\ $R$ is isomorphic to the fibred product $S\times_{V\times V}(U\times U)$. 
	\item The map $t\circ\mr{pr}_1:S\times_{s,V,\phi}U\to V$ sending $(h,y)$ to $t(h)$, is a surjective submersion. 
\end{itemize}

{\rm(ii)} Two Lie groupoids $R\rightrightarrows U$ and $S\rightrightarrows V$ are {\bf Morita equivalent} if there exists a third Lie groupoid $T\rightrightarrows W$ with Morita morphisms to $R\rightrightarrows U$ and $S\rightrightarrows V$.
\end{dfn}

\begin{rem}
(i) Here we followed the terminology of~\cite{bx};~\cite{mm} speak of weak equivalences and weakly equivalent respectively.

(ii) Morita equivalence defines an equivalence relation between Lie groupoids (see the remark after Proposition~5.12 in~\cite{mm}).
\end{rem}

Then we have (cf.\ Theorem 2.26 in~\cite{bx})

\begin{prp}
Let $\mc{X}$ and $\mc{Y}$ be differentiable stacks which are associated with the Lie groupoids $R\rightrightarrows U$ and $S\rightrightarrows V$. Then the stacks $\mc{X}$ and $\mc{Y}$ are equivalent \iff$R\rightrightarrows U$ and $S\rightrightarrows V$ are Morita equivalent.
\end{prp}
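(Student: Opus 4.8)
Write $\mc X$, $\mc Y$ for the two stacks. By hypothesis $\mc X$ carries an atlas $a\colon\underline U\to\mc X$ for which the Lie groupoid $\underline U\times_{\mc X}\underline U\rightrightarrows\underline U$, with source and target the two projections, is isomorphic to $R\rightrightarrows U$; likewise $b\colon\underline V\to\mc Y$ with $\underline V\times_{\mc Y}\underline V\cong S\rightrightarrows V$. I will treat as a black box, from the dictionary of~\cite{bx} and prior to this proposition, the fact that a differentiable stack is equivalent to the differentiable stack associated with the Lie groupoid of \emph{any} of its atlases; in particular, two differentiable stacks admitting atlases with isomorphic Lie groupoids are equivalent. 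I also use two routine facts: (i) a morphism $\underline A\to\mc Z$ out of a manifold is an atlas as soon as its base change along one atlas of $\mc Z$ is a surjective submersion of manifolds; (ii) if $p\colon\underline U\to\mc Z$ is an atlas and $\underline W\to\underline U$ a surjective submersion of manifolds, then the composite $\underline W\to\mc Z$ is an atlas. Finally I use repeatedly the elementary identity $\underline A\times_{\mc Z}\underline B\cong\underline A\times_{\underline{A_0}}(\underline{A_0}\times_{\mc Z}\underline{B_0})\times_{\underline{B_0}}\underline B$ whenever $\underline A\to\mc Z$, $\underline B\to\mc Z$ factor through manifolds $A_0$, $B_0$.

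\emph{Morita equivalence $\Rightarrow$ equivalence of stacks.} A Morita equivalence $R\rightrightarrows U\sim S\rightrightarrows V$ is witnessed by a third Lie groupoid with Morita morphisms to both, so it suffices to show that one Morita morphism $(\Phi,\phi)\colon(R\rightrightarrows U)\to(S\rightrightarrows V)$ forces $\mc X\simeq\mc Y$. Put $c:=b\circ\underline\phi\colon\underline U\to\mc Y$. Base--changing $c$ along the atlas $b$ identifies $\underline U\times_{\mc Y}\underline V$ with the manifold $S\times_{s,V,\phi}U$ and identifies the projection to $\underline V$ with the map $t\circ\mr{pr}_1$ appearing in the second Morita condition, which is a surjective submersion; by fact~(i), $c$ is an atlas of $\mc Y$. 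Using the identity above to factor both legs of $\underline U\times_{\mc Y}\underline U$ through $\underline V$ yields
\be
\underline U\times_{\mc Y}\underline U\;\cong\;\underline S\times_{\underline V\times\underline V}(\underline U\times\underline U)
\ee
with structure maps $(s,t)$ and $\phi\times\phi$ --- which is precisely the pullback groupoid that the first (cartesian) Morita condition identifies with $R\rightrightarrows U$; one checks this identification intertwines the groupoid structures. So $\mc Y$ admits an atlas whose Lie groupoid is $R\rightrightarrows U$, whence $\mc Y$ is equivalent to the differentiable stack associated with $R\rightrightarrows U$, hence to $\mc X$. Applying this to each of the two Morita morphisms out of the witnessing groupoid gives $\mc X\simeq\mc Y$.

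\emph{Equivalence of stacks $\Rightarrow$ Morita equivalence.} Composing $a$ with the equivalence transports it to an atlas $a'\colon\underline U\to\mc Y$ with $\underline U\times_{\mc Y}\underline U\cong R\rightrightarrows U$. Set $W:=\underline U\times_{\mc Y}\underline V$, a manifold since $b$ is representable, with projections $\phi_U\colon W\to U$ and $\phi_V\colon W\to V$; these are surjective submersions, being base changes of the atlases $a'$ and $b$. By fact~(ii) the composite $\underline W\to\underline U\to\mc Y$ (via $\phi_U$ and $a'$), which is canonically $2$--isomorphic to $\underline W\to\underline V\to\mc Y$, is an atlas; let $T:=\underline W\times_{\mc Y}\underline W$, with its Lie groupoid $T\rightrightarrows W$. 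The projections $\phi_U$, $\phi_V$ induce Lie groupoid morphisms $(T\rightrightarrows W)\to(R\rightrightarrows U)$ and $(T\rightrightarrows W)\to(S\rightrightarrows V)$, and both are Morita morphisms: factoring the two legs of $\underline W\times_{\mc Y}\underline W$ through $\underline U$ (resp.\ $\underline V$) gives the cartesian identity $T\cong R\times_{U\times U}(W\times W)$ (resp.\ with $S$, $V$), while the second Morita condition holds because $R\times_{s,U,\phi_U}W$ is identified with $\underline U\times_{\mc Y}\underline W$, its map $t\circ\mr{pr}_1$ to $U$ with the projection $\underline U\times_{\mc Y}\underline W\to\underline U$, and the latter is a surjective submersion since $\underline W\to\mc Y$ is an atlas (and symmetrically with $V$). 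Thus $R\rightrightarrows U$ and $S\rightrightarrows V$ are Morita equivalent through $T\rightrightarrows W$.

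The main difficulty is not conceptual but careful book--keeping: each fibre--product identification above must be checked to respect the \emph{groupoid} structures, not merely the underlying manifolds --- which is exactly what makes the black box about atlas groupoids applicable --- and one must keep the source/target conventions coherent so that the second Morita condition really is the statement making $c$, respectively $\underline W\to\mc Y$, a surjective submersion. Everything else is the elementary fibre--product bookkeeping encoded in the displayed identity in the first paragraph.
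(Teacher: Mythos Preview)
The paper does not supply its own proof of this proposition: it is stated with the parenthetical ``(cf.\ Theorem~2.26 in~\cite{bx})'' and no argument is given in the text. So there is no in--paper proof to compare against; you have filled in what the authors deliberately outsourced to Behrend--Xu.

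That said, your sketch is the standard argument and is essentially correct. For the forward direction you correctly observe that a Morita morphism $(\Phi,\phi)$ makes $b\circ\underline\phi\colon\underline U\to\mc Y$ an atlas (the second Morita condition is exactly the required surjective submersion after base change along $b$), and that the cartesian condition identifies its isotropy groupoid with $R\rightrightarrows U$; invoking the atlas--groupoid dictionary then finishes. For the converse you form the common refinement $W=\underline U\times_{\mc Y}\underline V$ and check that the two projections give Morita morphisms from $T\rightrightarrows W$ to each side; the identifications $T\cong R\times_{U\times U}(W\times W)$ and $T\cong S\times_{V\times V}(W\times W)$ are the right ones, and your verification of the second Morita condition via representability of the atlas is sound. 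The only caveat is that several of your ``black boxes'' (that a stack is equivalent to the stack associated with any of its atlas groupoids, and your facts (i)--(ii)) are themselves part of the Behrend--Xu dictionary, so your argument is really a reorganisation of the cited proof rather than an independent one --- which is entirely appropriate here, since the paper treats the proposition as a quotation.
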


We will sometimes abuse language and refer to a Lie groupoid associated with a stack as an atlas of the stack.

\begin{rem}
One can turn the category Lie groupoids into a $2$--category and establish a dictionary between differentiable stacks and Lie groupoids, see~\cite{bx}.
\end{rem}

\begin{ex}
(i) Consider the stack $\underline U$ with atlas provided by the identity $\Id:U\to U$. Then $\mathrm{Iso}(\Id)\rightrightarrows U$ is the unit groupoid defined in the examples after Definition~\ref{isogroupoid}. 

(ii) Consider the quotient stack $[U/G]$. The Lie groupoid $\mathrm{Iso}(U\times G,\bullet)\rightrightarrows U$ given by the atlas induced by $(\mathrm{pr}_1:U\times G\to U,\bullet:U\times G\to U)$ is the translation groupoid $G\times U\rightrightarrows U$. Indeed, objects in $\underline U\times_{[U/G]}\underline U(X)$ are determined by triples $(f:X\to U,g:X\to U,\varphi:X\to G)$ with $\varphi(x)\bullet f(x)=g(x)$ (cf.\ the example after Definition~\ref{repstack} to see this). Sending this triple to $(\varphi,f)$ gets the isomorphism $\underline U\times_{[U/G]}\underline U\cong\underline G\times\underline U$. With the second projection as source map and group action as target map (sending $(\varphi,f)$ to $\varphi\bullet f:X\to U$) we obtain the translation groupoid. In particular, taking $U=\ast$ we recover the Lie groupoid $\mathrm{Iso}(\ast\times G)$ of $BG$ coming from the natural atlas $\ast\to BG$, which is just the Lie groupoid $G\rightrightarrows\ast$.
\end{ex}

In the sequel we shall be mainly interested in the case of quotient stacks. The definition of Morita equivalence takes an easier shape when applied to this special case. First, a morphism $[U/G]\to[V/H]$ consists of a morphism $\phi:U\rightarrow V$ and a group homomorphism $\psi:G\rightarrow H$ which are compatible in the following sense. If $\bullet_G$ and $\bullet_H$ denote the respective group actions of $G$ and $H$ on $U$ and $V$, we require that
\ben
\phi(g\bullet_Gu) = \psi(g)\bullet_H\phi(u)
\ee
for all $g\in G$ and $u\in U$. Then $(\psi,\phi)$ is a Morita equivalence if
\begin{enumerate}
\item[\refmoritadiag] the diagram
\ben
\xymatrix{
G \times U \ar[rr]^{(\pr_2,\bullet_G)} \ar[d]_{\psi \times \phi} & & U \times U \ar[d]^{\phi \times \phi}\\
H \times V \ar[rr]^{(\pr_2,\bullet_H)} & & V \times V
}
\ee
is cartesian and
\item[\refmoritasurj] the morphism 
\ben
\Bhalfarray{ccccc}
H\times U & \rightarrow & V \\
(h,m) & \mapsto & h\bullet_H\phi(m)
\Ehalfarray
\ee
is surjective.
\end{enumerate}

\begin{rem}
If both $\phi:U\hookrightarrow V$ and $\psi:G\hookrightarrow H$ are inclusions, then~\refmoritadiag{} is tantamount to condition
\begin{flalign*}
\text{\refmoritadiagp} && \forall h\in H \text{ and }m\in U:\ \left[h\bullet m\in U\Rightarrow h \in G \right] . &&
\end{flalign*}
\end{rem}
%
%
%
%
%
\section{Symplectic toric DM stacks}\label{sympstack}
In this section we discuss the extension of the Lerman--Tolman theorem~\ref{lertol} to the stack setting.
\paragraph{Deligne--Mumford stacks.}
\begin{dfn}
A {\bf Deligne--Mumford stack} or {\bf DM stack} for short is a differentiable stack which admits an atlas given by a proper \'etale Lie groupoid $R\rightrightarrows U$. Its {\bf dimension} is $\dim\mc{X}=\dim U=\dim R$.
\end{dfn}

The dimension is indeed well--defined, cf.\ Section 2.5 in~\cite{bx}.

\begin{ex}
(i) The unit Lie groupoid $U\rightrightarrows U$ is \'etale and proper. Hence every manifold considered as a stack is a DM stack.

(ii) As discussed in Section~\ref{groupstack}, an effective orbifold in the sense of Section~\ref{delzant} gives rise to a proper \'etale Lie groupoid and thus to a DM stack. 
\end{ex}

To define further geometric structure on a DM stack we take the viewpoint of~\cite{lm} and define objects \wrt a fixed (not necessarily \'etale) atlas $R\rightrightarrows U$. To check independence of the atlas one can either show invariance under Morita equivalence or compatibility of these definitions with abstract stack theory.  
\paragraph{Symplectic DM stacks.}
For the definition of vector fields and differential forms on DM stacks we first generalise the concept of a Lie algebra associated with a Lie group to Lie groupoids.

\begin{dfn}
A {\bf Lie algebroid} over a manifold $M$ is a vector bundle $\mf{a}\to M$ with a Lie bracket on its space of sections $C^\infty(\mf{a})$, together with a vector bundle morphism $a:\mf{a}\to TM$ called the {\bf anchor} \st
\begin{itemize}
	\item the induced map $C^\infty(\mf{a})\to\mf{X}(M)$ between sections of $A$ and vector fields on $M$ is a Lie algebra morphism.
	\item for all $v,\,w\in C^\infty(\mf{a})$ and $f\in C^\infty(M)$, the identity $[v,fw]=f[v,w]+df(a(v))w$ holds.
\end{itemize}
\end{dfn}

With a Lie groupoid $R\rightrightarrows U$ we can canonically associate a Lie algebroid $\mf{r}\to U$ as follows (cf.~\cite{mm} Section 6.1). For an arrow $h:y\to x$ we can define a ``left multiplication'' $L_h:t^{-1}(y)\to t^{-1}(x)$ by composition $L_h(g):=hg$. This lifts to the involutive vector subbundle $\ker ds\to R$ of $TR\to R$. Namely, given $\xi\in(\ker ds)_g$ for some $g:z\to y$ we define $h\xi:=dL_h(\xi)$ which lies in $(\ker ds)_{hg}$. A section $X\in C^\infty(\ker ds)$ is {\em invariant} if $X(hg)=hX(g)$. The invariant sections form a Lie subalgebra $A$ of $C^\infty(\ker ds)$. As sections of $A$ are determined by their restriction to the set of units, we get a linear isomorphism between $A$ and the space of sections of the vector bundle $\mf{r}:=1^*\ker ds\to U$. In particular, $\mf{r}$ inherits a natural Lie algebra structure. We take $a=dt:\mf{r}\to TU$ as anchor map and call $(\mf{r},a)$  the {\bf associated} Lie algebroid.

\begin{ex}
Consider a Lie group $G$ as Lie groupoid via $G\rightrightarrows\ast$. The associated Lie algebroid $\mf{g}=1^\ast\ker ds\to\ast$ is a vector bundle whose Lie algebra structure is precisely the Lie algebra structure of left--invariant vector fields on $G$. We thus recover the usual Lie algebra of $G$.
\end{ex}

An arbitrary atlas of a DM stack is not necessarily \'etale as this property is not preserved under Morita equivalence. However, the following statement allows us to characterise the Lie groupoids representing a DM stack in terms of their associated Lie algebroids.

\begin{thm}{\bf\cite{cm}}
A Lie groupoid $R\rightrightarrows U$ is Morita equivalent to an \'etale groupoid \iff the Lie algebroid associated with $R\rightrightarrows U$ has injective anchor.
\end{thm}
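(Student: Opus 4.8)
The statement to prove is: a Lie groupoid $R\rightrightarrows U$ is Morita equivalent to an étale groupoid if and only if its associated Lie algebroid has injective anchor. I would organize the argument around the two implications, proving first the easier direction and then the substantive one.

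\textbf{The easy direction.} Suppose $R\rightrightarrows U$ is Morita equivalent to an étale groupoid $S\rightrightarrows V$. For an étale groupoid, $s$ and $t$ are local diffeomorphisms, so $\ker ds = 0$, hence the associated Lie algebroid $\mf{s} = 1^*\ker ds$ is the zero bundle and its anchor is (vacuously) injective. It therefore suffices to show that Morita equivalent Lie groupoids have isomorphic associated Lie algebroids — or at least that the property ``injective anchor'' is Morita invariant. I would verify this directly from the definition of a Morita morphism $(\Phi,\phi)\colon R\to S$: the cartesian square relating $(s,t)\colon R\to U\times U$ to $(s,t)\colon S\to V\times V$ shows that $\Phi$ restricts to a fibrewise isomorphism $\ker ds_R \cong \phi^*\ker ds_S$ covering $\phi$, and compatibility of $\Phi$ with the unit maps transports this to an isomorphism of the restricted bundles $\mf{r}\cong\phi^*\mf{s}$ intertwining the anchors $dt_R$ and $dt_S$ (up to the differential of $\phi$, which is a surjective submersion on the relevant fibre product). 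Since injectivity of a bundle map is preserved under such a pullback-isomorphism, the anchor of $\mf{r}$ is injective iff that of $\mf{s}$ is; in the étale case the latter is trivially injective.

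\textbf{The substantive direction.} Now suppose the associated Lie algebroid $(\mf{r}, a=dt)$ has injective anchor; we must produce an étale groupoid Morita equivalent to $R\rightrightarrows U$. The geometric meaning of ``injective anchor'' is that the infinitesimal isotropy at every point vanishes, i.e.\ $\mf{r}_x = (\ker ds)_{e_x}\cap\ker dt = 0$, so every isotropy group $R_x$ is discrete (this is exactly the hypothesis appearing in the passage just before the theorem, where discreteness of isotropy was used to pass from proper to étale via \cite{mm}). The plan is then to build a transversal: near a point $x\in U$, choose a submanifold $W\subset U$ through $x$ whose dimension equals $\dim U - \dim\mf{r}$, transverse to the ``orbit direction'' $a(\mf{r}) = \im dt \subset TU$, and form the \emph{restricted groupoid} $R_W := s^{-1}(W)\cap t^{-1}(W) \rightrightarrows W$. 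One checks that $\dim R_W = \dim W$ so $R_W\rightrightarrows W$ is étale, and that the inclusion $R_W\hookrightarrow R$, $W\hookrightarrow U$ is a Morita morphism precisely when $W$ meets every orbit and meets it transversally; the injective-anchor hypothesis guarantees this can be arranged (at least locally, and then one takes a disjoint union of such transversals covering $U$). Concretely, I would invoke the standard fact (e.g.\ from \cite{mm}, and recorded in \cite{cm}) that for a Lie groupoid the anchor image is exactly the tangent distribution to the orbits, so injectivity of $a$ makes the orbits of dimension $\dim\mf{r}$ with a well-defined normal bundle, along which an exponential-type slice produces the desired transversal $W$.

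\textbf{Where the difficulty lies.} The easy direction is essentially bookkeeping. The main obstacle is the substantive direction, and within it the delicate point is \emph{globality}: transversality and the slice construction are local, giving étale restricted groupoids $R_{W_\alpha}\rightrightarrows W_\alpha$ over pieces of $U$, but one must assemble $W=\bigsqcup W_\alpha$ so that $R_W\rightrightarrows W$ is both well-defined (a manifold — here one uses that $s$ is a submersion and $W$ is transverse to orbits so $s^{-1}(W)$ is a submanifold, and similarly for $t$) and a genuine Morita morphism to $R\rightrightarrows U$ — i.e.\ the two conditions in the definition of Morita morphism must be checked, the cartesian-square condition and the surjective-submersion condition $t\circ\mathrm{pr}_1\colon S\times_{s,V,\phi}U\to V$. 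The cartesian condition holds essentially by construction of a restricted groupoid; the surjectivity condition is where transversality-to-all-orbits is used. I expect this is exactly the content imported from \cite{cm} (Crainic–Moerdijk), and for the purposes of this expository paper I would state the slice construction and the two Morita checks, citing \cite{cm} for the proof that the resulting map is a surjective submersion and hence that $R_W\rightrightarrows W$ is Morita equivalent to $R\rightrightarrows U$.
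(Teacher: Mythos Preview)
The paper does not give its own proof of this theorem; it is simply quoted with attribution to Crainic--Moerdijk~\cite{cm} and then used as a black box to justify that Lie algebroids of DM atlases embed into $TU$. So there is nothing to compare your proposal against in the paper itself.

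That said, your outline is essentially the argument of~\cite{cm}. The easy direction is indeed bookkeeping: Morita morphisms pull back the Lie algebroid (up to isomorphism) and an \'etale groupoid has zero algebroid. For the substantive direction, the injective anchor makes the orbit distribution a regular foliation of $U$, and the construction you describe --- choose a complete transversal $W$ to the leaves, restrict to $R_W=s^{-1}(W)\cap t^{-1}(W)\rightrightarrows W$, and verify this is \'etale and Morita equivalent to $R\rightrightarrows U$ --- is exactly what Crainic--Moerdijk do. Your identification of the global assembly of local slices and the surjective-submersion check as the genuine work is accurate; one also needs that the connected components of the orbits all have the same dimension (which follows from injectivity plus the fact that the anchor image is involutive), so that a transversal of the correct dimension exists uniformly. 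Nothing in your plan is wrong, and nothing beyond what you have sketched is required.
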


For a Lie groupoid $R\rightrightarrows U$ representing a DM stack we can therefore think of its associated Lie algebroid as a subbundle of $TU$. This makes DM stacks a convenient class to work with. In the sequel, we consider a DM stack $\mc{X}$ together with a fixed atlas $R\rightrightarrows U$. 

\medskip

We first define the {\em space of $k$--forms} on $\mc{X}$ by
\ben
\Omega^k(\mc{X}):=\{(\alpha_1,\alpha_0)\in\Omega^k(R)\times\Omega^k(U)\,|\,s^*\alpha_0=\alpha_1=t^*\alpha_0\}.
 \ee
We can regard $\Omega^k(\mc{X})$ as $k$--forms annihilating $\mf{r}\subset TU$. If $\mf{r}^\perp\subset T^*U$ denotes the annihilator of $\mf{r}$, then $\alpha_0$ is a section of $\Lambda^k\mf{r}^\perp$. Furthermore, $\alpha_0$ is invariant under the natural ``action'' of $R$ on $U$, where $g\in R$ sends $s(g)$ to $t(g)$. Note that the exterior derivative $d$ commutes with pullbacks. Hence the exterior derivative induces a well--defined map $d:\Omega^k(\mc{X})\to\Omega^{k+1}(\mc{X})$ sending $(\alpha_1,\alpha_0)$ to $(d\alpha_1,d\alpha_0)$. In particular, we can speak about {\em closed} forms, i.e.\ forms in the kernel of~$d$. By Proposition 2.9 (ii) in~\cite{lm}, the resulting de Rham complex $\Omega^\ast(\mc{X})$ does not depend on the chosen atlas up to isomorphism.

\medskip

Analogously we define the space of vector fields $\mf{X}(\mc{X})$. A vector field is a section of $TU/\mf{r}$ which is equivariant under the ``action'' of $R$ on $U$. Concretely, call a pair $(v_1,v_0)$ in $\mf{X}(R)\times\mf{X}(U)$ {\em compatible} if $ds(v_1)=v_0\circ s,\,dt(v_1)=v_0\circ t$. Then we regard two pairs of compatible vector fields $(v_1,v_0)$ and $(w_1,w_0)$ as {\em equivalent} (denoted by $\sim$) \iff they differ only by a compatible pair $(u_1,u_0)$ with $u_1\in(\ker ds+\ker dt)$. (Note that $s$ is a surjective submersion, hence the relation $ds(v_1)=v_0\circ s$ determines $v_1$ up to sections of $\ker ds$, and similarly for $t$.) We then define
\ben
\mf{X}(\mc{X}):=\{(v_1,v_0)\in\mf{X}(R)\times\mf{X}(U)\,|\,ds(v_1)=v_0\circ s,\,dt(v_1)=v_0\circ t\}/\sim.
\ee
To keep notation simple we denote by $(v_1,v_0)$ both the compatible pair and the induced equivalence class. Again, up to isomorphism, $\mf{X}(\mc{X})$ does not depend on the chosen atlas (Proposition 2.9 (i) in~\cite{lm}). 
 
\begin{ex}
(i) For a manifold $U$ take the associated Lie groupoid $\mathrm{Iso}(\Id)\rightrightarrows U$ considered in Section~\ref{groupstack}. Then we recover the usual notion of a differential form and a vector field.

(ii) If $G$ acts freely and properly on $U$, let $\mf{g}^\sharp\to U$ denote the subbundle of fundamental vector fields in $TU$. Then $[U/G]\cong U/G$, whence $\Omega^k([U/G])=C^\infty(\Lambda^k\mf{g}^{\sharp\perp})^G$, the $G$--invariant $k$--forms which annihilate $\mf{g}^\sharp$. Similarly, $\mf{X}([U/G])=C^\infty(TU/\mf{g}^\sharp)^G$, the $G$--invariant sections of $TU/\mf{g^\sharp}$.
\end{ex}

From the previous definitions it follows that the {\em contraction} of a vector field $v=(v_1,v_0)$ with a form $\alpha=(\alpha_1,\alpha_0)$,
\ben
v\llcorner\alpha:=(v_1\llcorner\alpha_1,v_0\llcorner\alpha_0)
\ee
(where $v_j\llcorner\alpha_j$ is the usual contraction $\Omega^k\to\Omega^{k-1}$) is a well--defined operation $\Omega^k(\mc{X})\to\Omega^{k-1}(\mc{X})$. A $2$--form $\omega$ on a DM stack $\mc{X}$ is said to be {\em non--degenerate} \iff contraction with $\omega$ induces a linear isomorphism $\mf{X}(\mc{X})\to\Omega^1(\mc{X})$. 

\begin{dfn}\label{sympform}
A $2$--form $\omega$ on a DM stack is called {\bf symplectic} if it is non--degenerate and closed. A DM stack $(\mc{X},\omega)$ together with a symplectic form is called a {\bf symplectic DM stack}.
\end{dfn}

\begin{ex}
In continuation of the previous example (ii) a symplectic form on $[U/G]\cong U/G$ is a closed $G$--invariant $2$--form on $U$ whose kernel is {\em precisely} $\mf{g^\sharp}$ (this is the non--degeneracy condition). As an example, consider the action of a Lie group $G\subset T^d$ on $\C^d$ with associated moment map $\mu_G:\C^d\to\mf{g}^\ast$ (see the example after Definition~\ref{hamac}). Let $\xi\in\mf{g}^*$ be a regular value for $\mu_G$ and assume that $G$ acts freely on the embedded submanifold $i:\mu^{-1}_G(\xi)\hookrightarrow\C^d$. Then the kernel of the closed $2$--form $\omega=i^*\omega_0$ is $\mf{g}^\sharp$ so that $\omega$ descends to a symplectic form on $\mu^{-1}_G(\xi)/G$. This is the symplectic reduction principle (cf.\ for instance Section 23 in~\cite{cds}) which underlies the construction after Theorem~\ref{symtor}.
\end{ex}
\paragraph{Hamiltonian group actions.}
Next we wish to consider actions by a Lie group on a differentiable stack. Their definition is more subtle than in the case of manifolds for stacks are categories and thus group elements act as functors. However, the composition of two such functors may differ from the functor of the product of the corresponding group elements by a natural transformation. A precise definition of $G$--actions as used here is due to Romagny~\cite{r}. We will not give it here; instead, we rephrase $G$--actions on DM stacks in terms of Lie groupoids (see Prop.\ 1.5 in~\cite{r} and Prop.\ 3.2 in~\cite{lm}).

\begin{prp}
Suppose we have a $G$--action on a DM stack $\mc{X}$ for some Lie group $G$. Then there exists a $\boldsymbol{G}$--{\bf atlas} for $\mc{X}$, that is, there exists a Lie groupoid $R\rightrightarrows U$ where $G$ acts on both $R$ and $U$ freely and compatibly with the structure maps.
\end{prp}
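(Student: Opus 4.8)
\emph{Proof sketch.} The plan is to manufacture the $G$-atlas from \emph{any} atlas of $\mc{X}$ by adjoining a free factor of $G$ and then passing to the associated Lie groupoid; the only non-formal ingredient will be a rigidification statement for the resulting action. First I record the single feature of a $G$-action on a stack that enters the argument: it supplies a morphism $a\colon\underline G\times\mc{X}\to\mc{X}$ (``acting''), together with coherence $2$-isomorphisms for associativity and the unit, and this $a$ is a surjective submersion. Indeed, the shear morphism $\sigma\colon\underline G\times\mc{X}\to\underline G\times\mc{X}$, $(g,y)\mapsto(g,g\cdot y)$, is an isomorphism, with quasi-inverse $(g,y)\mapsto(g,g^{-1}\cdot y)$ (the coherence $2$-isomorphisms supplying $\sigma^{-1}\sigma\cong\id$), and $a=\mathrm{pr}_2\circ\sigma$, where $\mathrm{pr}_2\colon\underline G\times\mc{X}\to\mc{X}$ is a surjective submersion as a base change of $G\to\ast$.

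Next I would fix an atlas $x_0\colon\underline{U_0}\to\mc{X}$, which exists since $\mc{X}$ is a DM stack, and set $U:=G\times U_0$. The morphism $\id_{\underline G}\times x_0\colon\underline U\to\underline G\times\mc{X}$ is a base change of $x_0$ along $\mathrm{pr}_2$, hence a surjective submersion, and composing it with $a$ produces a morphism $x\colon\underline U\to\mc{X}$ which is therefore again a surjective submersion, i.e.\ an atlas. Let $G$ act on $U=G\times U_0$ by left translation on the first factor; this action is free (and proper). On objects $x$ sends $(g,u)$ to $g\cdot x_0(u)$, so it intertwines the two $G$-actions only \emph{up to} the associativity $2$-isomorphism, since $x(h\cdot(g,u))=(hg)\cdot x_0(u)$ while $h\cdot x(g,u)=h\cdot(g\cdot x_0(u))$.

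Then I would pass to the Lie groupoid $R:=\underline U\times_{\mc{X}}\underline U\rightrightarrows U$, that is $\mathrm{Iso}(x)$, which presents $\mc{X}$ and is a genuine Lie groupoid precisely because $x$ is an atlas. Its objects are triples $(f,g,\varphi)$ with $f,g\colon V\to U$ and $\varphi\colon x(f)\cong x(g)$; letting $G$ act by left translation on $f$ and on $g$ and through the $G$-action on $\mc{X}$ on $\varphi$ yields a $G$-action on $R$ which, by construction, commutes with the source, target, unit, inverse and multiplication maps. Freeness on $R$ is automatic: if $h\cdot r=r$, then applying the equivariant source map gives $h\cdot s(r)=s(r)$, hence $h=e$ since $G$ acts freely on $U$. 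Thus $R\rightrightarrows U$ is the desired $G$-atlas.

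The point glossed over above is the passage from the \emph{weak} equivariance of $x$ — so that the $G$-action just described on $R$ and $U$ a priori respects the group law and the structure maps only up to the coherence $2$-isomorphisms of the action — to a strict $G$-action compatible on the nose with $s$, $t$, $1$, $i$, $m$. This rigidification is exactly what makes the statement non-trivial, and it is the content of Proposition~1.5 in~\cite{r} (see also Proposition~3.2 in~\cite{lm}), which I would invoke rather than reprove; I expect it to be the main obstacle in a complete argument.
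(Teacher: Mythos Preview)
The paper does not prove this proposition at all: it is stated without proof and attributed to Proposition~1.5 in~\cite{r} and Proposition~3.2 in~\cite{lm}. Your sketch is correct and is precisely the standard construction underlying those references --- take any atlas $x_0\colon\underline{U_0}\to\mc{X}$, thicken to $U=G\times U_0$ with $G$ acting by left translation, use the action map to obtain a new atlas $x\colon\underline U\to\mc X$, and then form $R=\mathrm{Iso}(x)$; you also correctly isolate the one genuine subtlety (strictifying the weak $G$-equivariance of $x$ so that the induced $G$-action on $R\rightrightarrows U$ is honest rather than up to $2$-isomorphism) and defer it to the same two sources the paper cites. In that sense you have supplied more than the paper itself does, and your route coincides with what the cited references carry out.
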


\begin{rem}
If in addition $G$ acts properly on $R$ and $U$, $R/G$ and $U/G$ are again manifolds. The Lie groupoid $R/G\rightrightarrows U/G$ is then an atlas for the differentiable stack $\mc{X}/G$, the {\bf quotient of $\mc{X}$ by} $G$. Of course, $\underline U/G\cong U/G$.
\end{rem}

Let $R\rightrightarrows U$ be a $G$--atlas of $\mc{X}$. For $v\in\mf{g}$, let $v^\sharp=(v_1^\sharp,v_0^\sharp)$ denote the induced fundamental vector field on $R\times U$. Since the action of $G$ commutes with the structure maps, $v^\sharp\in\mf{X}(\mc{X})$.

\begin{dfn}
A $G$--action on a symplectic DM stack $(\mc{X},\omega)$ is called {\bf hamiltonian} if there is a $G$--atlas $R\rightrightarrows U$ with a $G$--equivariant $\mf{g}^*$--valued function $\mu=(\mu_1,\mu_0)$, i.e.\ $\mu\in\Omega^0(\mc{X})\otimes\mf{g}^*$, \st
\ben
v^\sharp\llcorner\omega=\big(d\langle\mu_1,v\rangle,d\langle\mu_0,v\rangle\big)
\ee
for any $v\in\mf{g}$. Again we refer to $\mu$ as the {\bf moment map} of the action.
\end{dfn}
\paragraph{Symplectic reduction.}
The next definition is also taken from~\cite{lm}.

\begin{dfn}\label{symtoricstack}
A {\bf symplectic toric DM stack} is a symplectic DM stack $(\mc{X},\omega)$ with a hamiltonian action by a compact torus $T$ \st
\begin{itemize}
	\item $T$ acts effectively on the coarse moduli space for any given $T$--atlas $R\rightrightarrows U$.
	\item $\dim\mc{X}=2\dim T$.
\end{itemize}
\end{dfn}

Generalising the example after Definition~\ref{sympform} we get for any regular value $\xi\in\mf{g}^\ast$ of $\mu_{T^d}$ the symplectic toric DM stack $[\quotient{\mu^{-1}_{T^d}(\xi)}{T^d}]$. Indeed, Theorem 5.4 in~\cite{lm} gives the following.

\begin{thm}{\bf (Lerman--Malkin~\cite{lm})}\label{symtor}
Let $G\subset T$ be a closed subgroup and $1\to\Gamma\to\widehat T\to T\to1$ an extension of the standard compact torus by a finite group $\Gamma$. Let $\widehat G$ denote the corresponding group in $\widehat T$ and $\xi\in\mf{g}^\ast$ be a regular value for the moment map $\mu_{\widehat{G}}:\C^d\to\mf{g}^\ast$. Then the quotient stack
\ben
\C^d\sslash_\xi\widehat G:=\left[\quotient{\mu^{-1}_{\widehat G}(\xi)}{\widehat G}\right]
\ee
is a symplectic toric DM stack acted on by the torus $\widehat{T}/\widehat{G}=T/G$. 
\end{thm}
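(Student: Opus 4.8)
The plan is to exhibit $[\mu^{-1}_{\widehat G}(\xi)/\widehat G]$ as a symplectic quotient and then to check, one at a time, that it satisfies Definition~\ref{symtoricstack}. Put $Z:=\mu^{-1}_{\widehat G}(\xi)$ and $i:Z\hookrightarrow\C^d$. Since $\Gamma$ is finite, $\widehat T\to T$ is a finite covering, so $\widehat G\subset\widehat T$ is a compact Lie group with Lie algebra $\mf g$ and $\mu_{\widehat G}=\iota^*\circ\mu_0$ for $\iota:\mf g\hookrightarrow\mf t^d$. As $\xi$ is a regular value, $Z$ is a closed submanifold of $\C^d$ of codimension $\dim\mf g$. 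The first point is that $\widehat G$ acts on $Z$ with finite stabilisers: for $z\in Z$ one has the standard identification $\coker(d(\mu_{\widehat G})_z)\cong\mf g_z^*$, where $\mf g_z$ is the Lie algebra of the stabiliser of $z$; since $\xi$ is regular, $\mf g_z=0$, so stabilisers are discrete, hence finite by compactness of $\widehat G$. Therefore the translation groupoid $\widehat G\times Z\rightrightarrows Z$ is proper with discrete isotropy, and its associated Lie algebroid is the action algebroid $\mf g\times Z$ with anchor $(v,z)\mapsto v^\sharp(z)$, which is fibrewise injective by local freeness. By~\cite{cm} (and since properness is a Morita invariant) this groupoid is Morita equivalent to a proper \'etale one, so $\mc X:=[Z/\widehat G]$ is a DM stack.

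Next I would obtain the symplectic form by reduction. The $2$--form $i^*\omega_0$ on $Z$ is closed and $\widehat G$--invariant, and its kernel at $z$ is $T_zZ\cap(T_zZ)^{\omega_0}$; since $T_zZ=\ker d(\mu_{\widehat G})_z=(\mf g\cdot z)^{\omega_0}$, taking the symplectic orthogonal in $\C^d$ a second time yields $(T_zZ)^{\omega_0}=\mf g\cdot z\subset T_zZ$, whence $\ker(i^*\omega_0)_z=\mf g\cdot z=\mf g^\sharp_z$. Consequently, on the atlas $\widehat G\times Z\rightrightarrows Z$ the pair $(s^*i^*\omega_0,i^*\omega_0)$, which equals $(t^*i^*\omega_0,i^*\omega_0)$ by invariance, defines a closed form $\omega\in\Omega^2(\mc X)$. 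By the definitions of $\Omega^\ast(\mc X)$ and $\mf X(\mc X)$ relative to this atlas, $\Omega^1(\mc X)$, resp.\ $\mf X(\mc X)$, are the $\widehat G$--invariant sections of $\mf g^{\sharp\perp}\subset T^*Z$, resp.\ of $TZ/\mf g^\sharp$; since $i^*\omega_0$ has kernel exactly $\mf g^\sharp$ it induces a fibrewise isomorphism $TZ/\mf g^\sharp\xrightarrow{\sim}\mf g^{\sharp\perp}$, so contraction with $\omega$ is bijective and $(\mc X,\omega)$ is a symplectic DM stack.

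For the torus action, note that $\widehat T$ acts on $\C^d$ through $\widehat T\to T\subset T^d$, and that torus moment maps are invariant, so $\mu_{\widehat G}$ is $\widehat T$--invariant; hence $\widehat T$ preserves $Z$ and commutes there with $\widehat G\subset\widehat T$, so $\widehat T$ acts on $\mc X$, and since the subgroup $\widehat G$ acts on $[Z/\widehat G]$ trivially up to a canonical $2$--isomorphism the action descends to $\widehat T/\widehat G=T/G$. Dualising $0\to\mf g\xrightarrow{\iota}\mf t^d\to\mf t^d/\mf g\to0$ identifies the Lie coalgebra of $\widehat T/\widehat G$ with $\ker\iota^*\subset\mf t^{d*}$; choosing $c\in\mf t^{d*}$ with $\iota^*c=\xi$, the map $\bar\mu:=\mu_0|_Z-c$ is $\widehat T$--invariant with values in $\ker\iota^*$ and descends to $\mu\in\Omega^0(\mc X)\otimes(\mf t^d/\mf g)^*$, and the moment map identity follows by restricting $v^\sharp\llcorner\omega_0=-d\langle\mu_0,v\rangle$ to $Z$ for lifts $v\in\mf t^d$ of $\bar v\in\mf t^d/\mf g$ and transporting to the groupoid. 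Finally, effectiveness on the coarse moduli space $Z/\widehat G$ follows from effectiveness of the $T^d$--action on $\C^d$ by restricting attention to the principal orbit stratum, and the dimension condition is the count $\dim\mc X=\dim Z-\dim\widehat G=2d-2\dim\mf g=2\dim(\widehat T/\widehat G)$.

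The step I expect to be the real obstacle is the construction of the $\widehat T/\widehat G$--action on the stack. A group action on a DM stack in the sense used here must be presented by a $G$--atlas on which the group acts \emph{freely}; here $\widehat G\subset\widehat T$, and the natural atlas $\widehat G\times Z\rightrightarrows Z$ only carries the $\widehat T$--action, which is not free, so one must pass to a Morita--equivalent $\widehat T/\widehat G$--atlas and verify the coherence of the $2$--isomorphisms measuring the failure of elements of $\widehat G$ to act as the identity. This is precisely the delicate bookkeeping carried out in Romagny~\cite{r} and Lerman--Malkin~\cite{lm}, which is why the statement is quoted from~\cite{lm}; the remaining verifications --- regular value $\Leftrightarrow$ finite stabilisers, the biorthogonal computation, and the moment map identity --- are routine symplectic reduction.
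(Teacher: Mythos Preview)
The paper does not give its own proof of this theorem: it is quoted verbatim as Theorem~5.4 of Lerman--Malkin~\cite{lm}, so there is nothing to compare your argument against here. Your sketch is a correct outline of the standard symplectic reduction argument transported to the DM stack setting, and you have correctly isolated the one step that is not routine --- namely, producing a genuine $\widehat T/\widehat G$--action on $[Z/\widehat G]$ in the sense of Romagny~\cite{r}, together with a $G$--atlas on which the quotient torus acts freely. That is exactly the content supplied by~\cite{lm} and is the reason the present paper cites rather than proves the result. The remaining ingredients of your sketch (regular value $\Rightarrow$ finite stabilisers via $\coker d\mu_z\cong\mf g_z^*$, injectivity of the anchor and the appeal to~\cite{cm}, the kernel computation $\ker(i^*\omega_0)_z=\mf g^\sharp_z$, and the dimension count $2d-2\dim\mf g=2\dim(T/G)$) are all standard and correctly stated.
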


\paragraph{Examples.}
One way of producing the data of Theorem~\ref{symtor} is to consider a labelled rational polytope $\underline\Delta\subset\R^{d\ast}$ as in Theorem~\ref{lertol}. Using the notational conventions of Section~\ref{delzant}, we define a linear map $\beta:\R^m\to\R^d$ by $\beta(e_j)=n_j u_j$, where $e_1,\ldots,e_m$ is the standard basis of $\R^m$. Since $u_j\in N$ for $j=1,\ldots,m$, the exact sequence
\ben
0\to\ker\beta\stackrel{\iota}{\longrightarrow}\R^m\stackrel{\beta}{\longrightarrow}\R^d\to0
\ee
gives rise to the (additively written) exact sequence on the torus level
\ben
0\to\widehat G:=\ker\bar\beta\stackrel{\bar\iota}{\longrightarrow}T^m\cong(\quotient{\R}{\Z})^m\stackrel{\bar\beta}{\longrightarrow}T^d\cong(\quotient{\R}{\Z})^d\to0,
\ee
that is,
\ben
\widehat G=\{[x]=[(x_1,\ldots,x_m)]\in T^m\,|\,x\in\R^m\mbox{ with }\beta(x)\in N\}.
\ee
Note that $\ker\beta$ is just the Lie algebra of $\widehat G$. Instead of $\beta$ we can also consider $\beta_0:\R^m\to\R^d$ defined by $\beta_0(e_j)=u_j$ which in the same way gives rise to a subgroup $G\subset T^m$. Then there is a finite extension given by the split sequence
\ben
0\to\Gamma\to\widehat G\stackrel{\bar n}{\to}G\to 0
\ee
induced by the map $\bar n[x]=[(n_1x_1,\ldots,n_mx_m)]$. Finally, $\xi=\iota^*(\eta_1/n_1,\ldots,\eta_m/n_m)$, where the $\eta_j$ are determined by~\eqref{etadata}, is a regular value for $\mu_{\widehat G}$ by Theorem 8.1 in~\cite{lt}. Now Theorem~\ref{symtor} applies.

\medskip

Next we consider two concrete examples. The first one is induced by the moment polytope of $\PP^2$ (see Figure~\ref{fig:proj}), but with two different non--trivial labellings. The resulting coarse moduli space has a singular divisor and thus codimension $1$ singularities. The second example comes from a rational, non--smooth polytope with trivial labelling (see Figure~\ref{fig:projweight}). Here, the singularities have codimension $2$.

\medskip

{\bf (i) The projective plane}
\begin{figure}
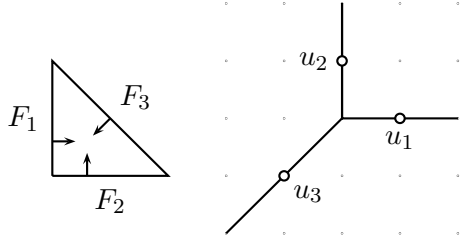

\figproj
\caption{The polytope and its normal fan of the projective plane $\PP^2$.}
\label{fig:proj}
\end{figure}
We have the facets $F_1$, $F_2$ and $F_3$ (see Figure~\ref{fig:proj}) which we label by $(1,1,2)$ and $(2,2,2)$. It follows that $\eta_1=\eta_2=0$ and $\eta_3=1$. The resulting maps $\beta_1$ and $\beta_2$ are given by 
\ben
\Bhalfarray{rclcrcl}
\beta_1:\ \R^3 & \rightarrow & \R^2 &\quad&
\beta_2:\ \R^3 & \rightarrow & \R^2 \\
e_1 & \mapsto & 1 \cdot u_1 &\quad&
e_1 & \mapsto & 2 \cdot u_1 \\
e_2 & \mapsto & 1 \cdot u_2 &\quad&
e_2 & \mapsto & 2 \cdot u_2 \\
e_3 & \mapsto & 2 \cdot u_3 &\quad&
e_3 & \mapsto & 2 \cdot u_3 \\
\Ehalfarray
\ee
which we represent by the matrices
\be\label{matrix1}
M_1=
\begin{pmatrix}
1&0&-2\\
0&1&-2\\
\end{pmatrix}
\textnormal{ and }
M_2=
\begin{pmatrix}
2&0&-2\\
0&2&-2\\
\end{pmatrix}.
\ee
Now $\beta_{1,2}(x_1,x_2,x_3)\in\Z^2$ \iff $x_{1,2}-2x_3\in\Z$ (for $\beta_1$) and $2(x_{1,2}-x_3)\in\Z$ (for $\beta_2$). Since $\bar n_1[x_2,x_2,x_3]=[x_1,x_2,2x_3]$ and $\bar n_2[x_2,x_2,x_3]=[2x_1,2x_2,2x_3]$, we get the exact sequences
\ben
\begin{array}{c}
0\to\Gamma_1\cong\Z_2\to\widehat G_1=\{[2x,2x,x]\,|\,x\in\R\}\stackrel{\bar n_1}{\longrightarrow}G=(\quotient{\R}{\Z})^3\to0\\[5pt]
0\to\Gamma_2\cong\Z_2\to\widehat G_2=\{[x+a,x+b,x]\,|\,x\in\R,\,a,b\in\tfrac{1}{2}\Z\}\stackrel{\bar n_2}{\longrightarrow}G=(\quotient{\R}{\Z})^3\to0.
\end{array}
\ee
In both cases we have $\Gamma_{1,2}=\{(0,0,c)\,|\,c\in\tfrac{1}{2}\Z\}$. Then $\widehat G_1$ and $\widehat G_2$ act on $\C^3$ via the inclusions (written multiplicatively)
\ben
\Bhalfarray{rclcrcl}
S^1\cong\widehat G_1&\rightarrow&\TT^3&\quad&\Z_2\times\Z_2 \times S^1\cong\widehat G_2&\rightarrow&\TT^3\\
t=[(2x,2x,x)]&\mapsto& (t^2,t^2,t) &\quad&([a],[b],t)=[(x+a,x+b,x)] & \mapsto & \left((-1)^{2a}t,(-1)^{2b}t, t\right).\\
\Ehalfarray
\ee
On the other hand, $\iota_k(\ker\beta_k)$, $k={1,2}$, is spanned by $(2,2,1)$ and $(1,1,1)$ respectively so that
\ben
\begin{array}{c}
\mu_{\widehat G_1}(z_0,z_1,z_2)=\iota_1^\ast\circ\mu(z_0,z_1,z_3)=2|z_0|^2+2|z_1|^2+|z_2|^2\\[5pt]
\mu_{\widehat G_2}(z_0,z_1,z_2)=\iota_2^\ast\circ\mu(z_0,z_1,z_3)=|z_0|^2+|z_1|^2+|z_2|^2.
\end{array}
\ee
Hence $\mu_{\widehat G_k}^{-1}(\iota_k^\ast(\eta))=\mu_{\widehat G_k}^{-1}(2)$ is diffeomorphic to $S^5$ and we obtain the toric symplectic DM stacks $\C^3\sslash_2\widehat G_{1,2}=[S^5/\widehat G_{1,2}]$. The isotropy groups are trivial for $(z_0,z_1,z_2)$ with $|z_0|^2+|z_1|^2<c_{1,2}$ where $c_1=1$ and $c_2=2$, and otherwise isomorphic to $\Z_2$. 

\medskip

{\bf (ii) Weighted projective space}
\begin{figure}
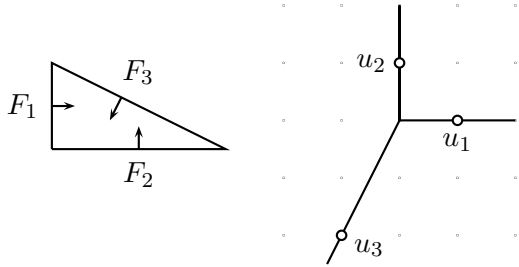

\figweightproj
\caption{The polytope and its normal fan of the weighted projective space $\PP^2(1,1,2)$.}
\label{fig:projweight}
\end{figure}
Here we take the polytope of Figure~\ref{fig:projweight} with trivial labelling, i.e.\ $\beta(e_i)=u_i$, so the associated matrix is 
\be\label{matrix2}
M=
\begin{pmatrix}
1&0&-1\\
0&1&-2\\
\end{pmatrix}.
\ee
Proceeding as above, we see that $\iota(\ker\beta)$ is spanned by $(1,2,1)$ and $\widehat G=\{[(x,2x,x)]\,|\,x\in\R\}$ acts on $\C^3$ via the inclusion
\ben
\Bhalfarray{rcl}
\widehat G\cong S^1 & \rightarrow & \TT^3\\
t & \mapsto & (t,t^2,t).
\Ehalfarray
\ee
Now $\eta=(0,0,2)$, so $2$ is a regular value for $\mu_{\widehat G}(z_0,z_1,z_2)=|z_0|^2+2|z_1|^2+|z_2|^2$ and $\mu^{-1}_{\widehat G}(2)$ is diffeomorphic to $S^5$. We obtain the toric symplectic stack $\C^3\sslash_2\widehat G=[S^5/\widehat G]$ -- the $2$--dimensional analogue of the $2$--conehead. The only nontrivial isotropy group is $\Z_2$ for the image of $(0,1,0)$.
%
%
%
%
%
\section{Complex toric DM stacks}\label{anastack}
A {\bf (complex) toric variety} is a normal variety which contains an algebraic torus $\TT$ as an open dense subset and such that the action of $\TT$ on itself extends to the whole variety. This definition was subsequently generalised by Iwanari~\cite{i} and Fantechi et al.~\cite{fmn} to {\em (complex) toric DM stacks}, that is, (separated) DM stacks together with a stacky DM torus as an open dense subset and such that its action extends to the stack (cf. Definition 3.1 in [loc.~cit.]). Based upon this definition, Fantechi et al carried out a classification of complex toric DM stacks. Prior to this, Borisov et al~\cite{bcs} constructed complex toric DM stacks as quotients. It is this construction we want to outline in this section in the case of trivial generic stabiliser. Note that the construction of~\cite{bcs} via stacky fans gives stacks isomorphic to the toric DM stacks as considered in~\cite{fmn}. However, the isomorphism is not unique if there is a non--trivial generic stabiliser (cf. Theorem II and Remark 7.26 in \cite{fmn}).
\paragraph{Stacky fans.}
For a fan $\Sigma$ we denote by $\Sigma(i)$ the set of $i$--dimensional cones in $\Sigma$.
In particular, $\Sigma(1)$ is the set of rays. Let $m$ be the cardinality of $\Sigma(1)$.

\begin{dfn}
A {\bf stacky fan} is a triple $\Stacky=(\Sigma,N,\beta)$ where
\begin{itemize}
	\item $N\cong\Z^d$ is a lattice with dual lattice $M=\Hom(N,\ZZ)$.
	\item $\Sigma$ is a complete and simplicial fan in $N_\Q = N\otimes\Q$, i.e.\ the union of all cones covers the whole $N_\Q$, and the generators of the rays of each cone are linearly independent.
	\item $\beta:\Z^m\rightarrow N$ is a map \st if  $u_j$ denotes the primitive generator of $\rho_j\in\Sigma(1)$, then $\beta(e_j)=n_j\cdot u_j$ for some $n_j\in\N_{>0}$. We think of $\beta$ as a choice of lattice generators for the rays in $\Sigma(1)$. 
\end{itemize}
\end{dfn}

\begin{rem}
Instead of completeness it suffices to require that the rays of the fan $\Sigma$ generate $N_\Q$ (cf.\ the analogous construction of toric varieties, e.g.~\cite{cox}). Geometrically this means that there are no torus factors, see for instance Section 5.1 in~\cite{cls} or Remark 7.14 in~\cite{fmn}. Moreover, $N$ can be any finitely generated abelian group.
\end{rem}
\paragraph{The case of toric varieties.}
A toric variety $X=X(\Sigma)$ with simplicial fan $\Sigma$ and without torus factors can be written as a good geometric quotient
\ben
X(\Sigma)=\C^m_\Sigma \slash H.
\ee
Here, $\C^m_\Sigma$ and $H$ are defined as follows. Consider $\C^m$ as the direct product of copies of $\C=\Spec k[x_j]$ for every ray $\rho_j\in\Sigma(1)$. The monomials
\ben
x(\sigma):=\prod_{\rho_j \not= \sigma} x_j\textnormal{ for } \sigma \in \Sigma
\ee
generate the so--called {\em irrelevant ideal} $I$ whose vanishing locus is $Z$ (since $\Sigma$ is assumed to be complete, it is actually enough to take only the monomials $x(\sigma)$ associated with the top--dimensional cones $\sigma\in\Sigma(n)$). Hence $Z$ is simply a union of coordinate hyperplanes. We set $\C^m_\Sigma := \C^m \setminus Z$. Let $\TT=\Hom(M,\C^\ast)$ be the algebraic torus of $X$. To obtain the group action we look at one of the most important exact sequences in toric geometry:
\be\label{ex:theSeq}
\Bhalfarray{ccccccccccccc}
0\longrightarrow & M& \longrightarrow & \displaystyle{\bigoplus_{j=1}^m}\ \Z \cdot D_j  & \longrightarrow& \Cl(X)& \longrightarrow0,\\
& w & \mapsto & \sum_j\langle w,u_j\rangle D_j
\Ehalfarray
\ee
where $D_j$ is the $\TT$--invariant divisor corresponding to the ray $\rho_j$, and $\Cl(X)$ is the divisor class group of $X$. We apply the functor $\Hom_\Z(-,\C^\ast)$ to this sequence. Although the $\Hom$--functor is only left--exact, the sequence
\ben
\Bhalfarray{ccccccccc}
1\longrightarrow&\Hom(\Cl(X),\C^\ast)&\longrightarrow&&(\C^\ast)^m=\Hom(\Z^m,\C^\ast)&\longrightarrow&&\TT&\longrightarrow 1
\Ehalfarray
\ee
is still exact, for $\CC^\ast$ is divisible. Then $H:=\Hom(\Cl(X),\C^\ast)$ acts on $\C^m_\Sigma$ via the natural inclusion into $(\C^\ast)^m$. By Theorem 1.11 in Chapter 5 of~\cite{cls} the quotient $\C^m_\Sigma \slash H$ is a geometric quotient and  isomorphic to $X(\Sigma)$.
\paragraph{The generalisation to toric stacks.}
For a stacky fan $\Stacky$ we proceed similarly and define the stack
\be\label{tvstack}
\XX(\Stacky)=\left[\quotient{\C^m_\Sigma}{H(\beta)}\right].
\ee
Here $\C^m_\Sigma$ is constructed as before, but the definition of $H(\beta)$ is more elaborate.

\medskip

Consider the map $\beta:\Z^m\rightarrow N$. As remarked above, $N$ could be any abelian group so that dualising $\beta$ destroys all torsion information. Although we are dealing here with a torsion--free $N$, we present the more general procedure used in~\cite{bcs}. So, instead of dualising $\beta$ directly we consider the mapping cone in the derived category of $\Z$-modules, i.e.\ the exact triangle
\ben
\xymatrix{
 & & \ZZ^m \ar[r]^{\id} \ar[d]^{\beta} & \ZZ^m\\
\ZZ^m \ar[r]^{\beta} & N \ar[r]^{\id} & N\\
\ZZ^m \ar[r] & N \ar[r] & \Cone(\beta) \ar[r] & 
\ZZ^m[1].
}
\ee
In the general situation that $N$ is not free but a finitely generated abelian group, $N$ needs to be replaced by its free resolution in the diagram above.
Now we dualise to get
\ben
\xymatrix{
 & & (\ZZ^m)^\ast & (\ZZ^m)^\ast\ar[l]_{\id^\ast} \\
(\ZZ^m)^\ast & M\ar[l]_{\beta^\ast}  & M\ar[l]_{\id^\ast} \ar[u]_{\beta^\ast} \\
(\ZZ^m)^\ast & M\ar[l]  & \Cone(\beta)^\ast\ar[l]  & 
(\ZZ^m)^\ast[1]\ar[l] .
}
\ee
Rolling out this triangle by taking cohomology leads to the long cohomology sequence,
but we are only interested in its end
\ben
H^0 \Cone(\beta)^\ast \longrightarrow M \stackrel{\beta^\ast}{\longrightarrow} \left(\ZZ^m\right)^\ast 
\stackrel{\beta^\vee}{\longrightarrow} H^1 \Cone(\beta)^\ast \longrightarrow \Ext^1(N,\ZZ) 
\longrightarrow 0.
\ee
Since $N$ is free, $\Ext^1(N,\Z)$ vanishes, hence $H^1 \Cone(\beta)^\ast$ is just
 $\coker(\beta^\ast)$ and $\beta^\vee:\left(\Z^m\right)^\ast\rightarrow\coker \beta^\ast$ is surjective. 

\medskip

On the other hand, the fan $\Sigma$ is complete so that $\beta:\Z^m\rightarrow N$, though not necessarily surjective, has only finite cokernel. Hence $H^0\Cone(\beta)^\ast=\ker(\beta^\ast)=\coker(\beta)^\ast=0$, which yields the exact sequence
\be\label{ex:theSeqStack}
0\longrightarrow M\stackrel{\beta^\ast}{\longrightarrow}\left(\Z^m\right)^\ast\stackrel{\beta^\vee}{\longrightarrow}\coker(\beta^\ast)\longrightarrow0.
\ee
In essence, this is the sequence from~\eqref{ex:theSeq}. Therefore, applying the functor $\Hom_\Z(-,\C^\ast)$ gives an action of
\ben
H(\beta):=\Hom(\coker(\beta^\ast),\C^\ast).
\ee
on $\C^m_\Sigma$ via the embedding $\Hom(\beta^\vee,\C^\ast)$ into $\Hom((\Z^m)^\ast,\C^\ast)$. This defines the stack $\XX(\Stacky)$ of~\eqref{tvstack}.

\begin{rem}
\label{rmk:stackVar}
(i) Explicitly, the map $\beta^\ast$ is given by
\ben
\Bhalfarray{rclcrcl}
M & \stackrel{\beta^\ast}{\longrightarrow} & \Hom(\Z^m,\Z) &\quad& 
M & \longrightarrow & \displaystyle{\bigoplus_{j=1 }^m}\ \Z \cdot D_j\\
w & \mapsto & \left[ e_j \mapsto \skalp{w,\beta(e_j)} 
= \skalp{w,n_j \cdot u_j} \right] &\quad&
w & \mapsto & \sum_j \skalp{w,u_j} D_j.
\Ehalfarray
\ee
Comparing this with the map in~\eqref{ex:theSeq}, we recover the classical toric case by taking for $\beta$ the map $[e_j\mapsto u_j]$.

(ii) For general $N$, the group $H(\beta)$ is $\Hom(H^1\Cone(\beta)^\ast,\C^\ast)$. The case of torsion--free $N$ corresponds precisely to complex toric DM orbifolds (cf.\ Lemma 7.15(2) in~\cite{fmn}).
\end{rem}
\paragraph{Open substacks.}
For toric varieties, the cones $\sigma$ of maximal dimension in the defining (complete and simplicial) fan $\Sigma$ give open charts $U_\sigma$ of $X(\Sigma)$. For toric stacks Proposition 4.3 in \cite{bcs} yields a similar statement.

\medskip

Let $\sigma$ be such a cone of dimension $d$. We can restrict the map $\beta:\Z^m\rightarrow N$ to $\beta_\sigma:\Z^d\rightarrow N$ such that $\beta_\sigma (\N^d)\otimes\Q=\sigma$. Set $N_{\stcky\sigma}=\im\beta_\sigma$. This is a sublattice of $N$ of finite order, i.e.\ $N(\stcky\sigma)=N/N_{\stcky \sigma}$ is a finite group. Then $\stcky \sigma = (\sigma, N, \beta_\sigma)$ defines an open substack $\XX(\stcky \sigma)$ of $\XX( \stcky \Sigma)$. The important observation of Proposition 4.3 in~[loc.~cit.] is that $\XX(\stcky \Sigma)$ is locally the quotient by a finite group $H(\beta_\sigma)\cong N(\stcky\sigma)$:
\ben
\XX(\stcky \sigma) = \left[ \quotient{\CC^d}{H(\beta_\sigma)} \right].
\ee

\begin{prp}
\label{prp:localStack}
The quotient $X(\stcky \sigma) = \quotient{\CC^d}{H(\beta_\sigma)}$ is isomorphic to
$X(\sigma)$.
\end{prp}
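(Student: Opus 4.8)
The plan is to show that taking the quotient stack $[\CC^d/H(\beta_\sigma)]$ and passing to its coarse moduli space recovers the affine toric variety $X(\sigma)$ attached to the cone $\sigma$. Since $H(\beta_\sigma)$ is finite, $X(\stcky\sigma) = \CC^d/H(\beta_\sigma)$ is an honest affine variety, so it suffices to identify its coordinate ring with that of $X(\sigma)$, namely the semigroup algebra $\CC[\sigma^\vee \cap M]$. First I would unwind the construction of $H(\beta_\sigma)$ in this local situation. Because $\sigma$ is a maximal cone of a simplicial fan, $\beta_\sigma : \ZZ^d \to N$ is injective with finite cokernel, so the dual $\beta_\sigma^\ast : M \to (\ZZ^d)^\ast$ is injective, and by the discussion preceding~\eqref{ex:theSeqStack} we get the short exact sequence $0 \to M \xrightarrow{\beta_\sigma^\ast} (\ZZ^d)^\ast \xrightarrow{\beta_\sigma^\vee} \coker(\beta_\sigma^\ast) \to 0$ with $\coker(\beta_\sigma^\ast)$ finite. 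Applying $\Hom_\ZZ(-,\CC^\ast)$ and using divisibility of $\CC^\ast$ yields the exact sequence $1 \to H(\beta_\sigma) \to (\CC^\ast)^d \to \TT_N \to 1$, where $H(\beta_\sigma) = \Hom(\coker(\beta_\sigma^\ast),\CC^\ast) \cong N(\stcky\sigma) = N/N_{\stcky\sigma}$ (the last isomorphism because $\coker(\beta_\sigma^\ast)$ is, via the snake lemma applied to $\beta_\sigma$ and its dual, Pontryagin dual to $\coker(\beta_\sigma)$).

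Next I would compute the invariant ring. The action of $H(\beta_\sigma)$ on $\CC^d = \Spec\CC[x_1,\dots,x_d]$ is the restriction of the standard $(\CC^\ast)^d$-action, so a Laurent monomial $x^a = x_1^{a_1}\cdots x_d^{a_d}$ (with $a \in (\ZZ^d)^\ast \cong \ZZ^d$) is $H(\beta_\sigma)$-invariant if and only if its character, the image of $a$ in $\coker(\beta_\sigma^\ast)$, is trivial — equivalently $a$ lies in the image of $\beta_\sigma^\ast$, i.e.\ $a = \beta_\sigma^\ast(w)$ for some $w \in M$. Intersecting with the non-negativity condition $a \in \NN^d$ that a genuine (non-Laurent) monomial of $\CC[x_1,\dots,x_d]$ must satisfy, the invariant monomials correspond to $w \in M$ with $\skalp{w, \beta_\sigma(e_j)} = \skalp{w, n_j u_j} \geq 0$ for all $j$, which since $n_j > 0$ is exactly $\skalp{w,u_j} \geq 0$ for every generator $u_j$ of a ray of $\sigma$ — that is, $w \in \sigma^\vee \cap M$. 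Hence $\CC[x_1,\dots,x_d]^{H(\beta_\sigma)} = \CC[\sigma^\vee \cap M] = \CC[X(\sigma)]$, and since $\CC[\CC^d]$ is finite over this invariant subring (finite group action), $\CC^d/H(\beta_\sigma) = \Spec\CC[x]^{H(\beta_\sigma)} \cong X(\sigma)$.

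The main subtlety — and the step I would be most careful about — is the bookkeeping identifying the character group on which a monomial $x^a$ transforms: one must check that the embedding $H(\beta_\sigma) \hookrightarrow (\CC^\ast)^d$ is precisely $\Hom(\beta_\sigma^\vee, \CC^\ast)$, so that the weight of $x^a$ under $H(\beta_\sigma)$ is the pairing of $\beta_\sigma^\vee(a) \in \coker(\beta_\sigma^\ast)$ with the points of $H(\beta_\sigma)$, vanishing identically iff $\beta_\sigma^\vee(a) = 0$. This is where the derived-category/dualization formalism of the previous section is doing real work, and the explicit description of $\beta_\sigma^\ast$ in Remark~\ref{rmk:stackVar}(i) is exactly what converts the abstract cokernel condition into the concrete inequalities $\skalp{w,u_j} \geq 0$. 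Everything else — the injectivity and finite cokernel of $\beta_\sigma$ from simplicial maximality, exactness after $\Hom(-,\CC^\ast)$ from divisibility of $\CC^\ast$, and the standard identification $X(\sigma) = \Spec\CC[\sigma^\vee\cap M]$ from toric geometry — can be cited or is routine.
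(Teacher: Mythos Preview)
Your argument is correct, but it takes a genuinely different route from the paper's. The paper does not compute the invariant ring $\CC[x_1,\dots,x_d]^{H(\beta_\sigma)}$ directly as $\CC[\sigma^\vee\cap M]$. Instead it introduces the ``unlabelled'' map $\beta_{\sigma,0}:e_j\mapsto u_j$, builds a commutative diagram comparing $\beta_\sigma^\ast$ and $\beta_{\sigma,0}^\ast$, and extracts via the snake lemma a short exact sequence
\[
0\longrightarrow\bigoplus_j\Z_{n_j}\longrightarrow H(\beta_\sigma)\longrightarrow H(\beta_{\sigma,0})\longrightarrow 0.
\]
The invariants are then taken in two stages: first by $\bigoplus_j\Z_{n_j}$ (which sends $\CC[x_1,\dots,x_d]$ to $\CC[x_1^{n_1},\dots,x_d^{n_d}]\cong\CC[x_1,\dots,x_d]$), and then by $H(\beta_{\sigma,0})$, at which point the paper simply invokes the classical Cox quotient $\CC^d/H(\beta_{\sigma,0})\cong X(\sigma)$ as known. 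Your approach is more self-contained --- it identifies the invariant monomials directly with $\sigma^\vee\cap M$ without appealing to the unlabelled case --- whereas the paper's decomposition has the side benefit of making the extension $\bigoplus_j\Z_{n_j}\hookrightarrow H(\beta_\sigma)$ explicit, which is exactly what drives the subsequent remark about the extra $\Z_{n_j}$ isotropy along the divisors $\{x_j=0\}$.
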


\begin{proof}
Let $\beta_\sigma:\Z^d\rightarrow N$ be given by $e_j\mapsto n_ju_j$ and define $\beta_{\sigma,0}:\Z^d \rightarrow N,e_j\mapsto u_j$. If $\phi:\Z^d\hookrightarrow\Z^d$ is the obvious map \st $\beta=\beta_{\sigma,0}\circ\phi$, then there exists a big commutative diagram
\ben
\xymatrix{
 & \displaystyle{\bigoplus_j} \quotient{\Z^\ast}{n_j\Z^\ast} \ar[r]^{\cong \text{\phantom{hallo}}} & \quotient{\coker \beta_\sigma^\ast}{\coker \beta_{\sigma,0}^\ast}  \\
M \ar@{^(->}[r]^{\beta_\sigma^\ast}  & (\Z^d)^\ast \ar@{->>}[r] \ar@{->>}[u] &
 \coker \beta_\sigma^\ast \ar@{->>}[u]
\\
M \ar@{^(->}[r]^{\beta_{\sigma,0}^\ast} \ar@{=}[u] & (\Z^d)^\ast \ar@{->>}[r] \ar@{^(->}[u]_{\phi^\ast} & \coker \beta_{\sigma,0}^\ast \ar@{^(->}[u]\\
}
\ee
The inclusion in the lower right hand side corner and the isomorphism in the top row can be deduced from the snake lemma. We apply the functor $\Hom(-,\CC^\ast)$ to the exact sequence in the right column and obtain
\ben
0 \longrightarrow \bigoplus_j\Z_{n_j}\longrightarrow H(\beta_\sigma) \longrightarrow H(\beta_{\sigma,0})\longrightarrow 0
\ee
where the $\Z_{n_j}$ denote the cyclic groups of $n_i$--th roots of unity. It is well--known from geometric invariant theory that the morphism $\Spec A\rightarrow\Spec A^G$ is a good categorical quotient if $G$ is a reductive group acting algebraically on the affine variety $\Spec A$, whence
\ben
\quotient{\C^d}{H(\beta_\sigma)}=\Spec\C[x_1, \ldots, x_d]^{H(\beta_\sigma)}.
\ee
Using the previous exact sequence we conclude
\ben
\C[x_1, \ldots, x_d]^{H(\beta_\sigma)}=\left(\C[x_1, \ldots, x_d]^{\bigoplus_j\Z_{n_j}}\right)^{H(\beta_{\sigma,0})}\cong\C[x_1, \ldots,x_d]^{H(\beta_{\sigma,0})} 
\ee
to obtain the isomorphism $X(\stcky\sigma)\cong X(\sigma)$.
\end{proof}

\begin{rem}
Even though $X(\stcky\sigma)$ is isomorphic to $X(\sigma)$ as an affine variety, the torus actions are different. Furthermore, these spaces are also different when considered as orbifolds or stacks $[\C^d/H(\beta_\sigma)]$ and $[\C^d/H(\beta_{\sigma,0})]$: The action of $H(\beta_{\sigma,0})$ is free except on a closed subset of codimension at least $2$, which becomes the singular locus of the quotient $X(\sigma)$. As soon as $n_j >1$, $H(\beta_\sigma)$ does \emph{not} act freely anymore. In particular, this action has $\Z_{n_j}$ as isotropy group for any point in the divisor $\{x_j = 0\}$ of $X(\stcky \sigma)$.
\end{rem}
\paragraph{Examples.}
We revisit the examples from Section~\ref{sympstack}.
\subsubsection*{(i) The projective plane}
For the fan $\Sigma$ of $\PP^2$ (see Figure~\ref{fig:proj}) we consider again the maps $\beta_{1,2}$:
\ben
\Bhalfarray{rclcrcl}
\beta_1:\ \ZZ^3 & \rightarrow & N &\quad&
\beta_2:\ \ZZ^3 & \rightarrow & N \\
e_1 & \mapsto & 1 \cdot u_1 &\quad&
e_1 & \mapsto & 2 \cdot u_1 \\
e_2 & \mapsto & 1 \cdot u_2 &\quad&
e_2 & \mapsto & 2 \cdot u_2 \\
e_3 & \mapsto & 2 \cdot u_3 &\quad&
e_3 & \mapsto & 2 \cdot u_3 \\
\Ehalfarray
\ee
represented by the matrices $M_{1,2}$ in~\eqref{matrix1}. For the computation of $\coker\beta^\ast_{1,2}$  we diagonalise the transposes of $M_{1,2}$ with an element in $\Gl(3,\Z)$ and get
\ben
\begin{pmatrix}
1 & 0 & 0\\
0 & 1 & 0\\
2 & 2 & 1\\
\end{pmatrix}
\cdot
\begin{pmatrix}
1 & 0\\
0 & 1\\
-2 & -2\\
\end{pmatrix}
=
\begin{pmatrix}
1 & 0\\
0 & 1\\
0 & 0\\
\end{pmatrix},
\ \ 
\begin{pmatrix}
1 & 0 & 0\\
0 & 1 & 0\\
1 & 1 & 1\\
\end{pmatrix}
\cdot
\begin{pmatrix}
2 & 0\\
0 & 2\\
-2 & -2\\
\end{pmatrix}
=
\begin{pmatrix}
2 & 0\\
0 & 2\\
0 & 0\\
\end{pmatrix}
\ee
Hence $\coker(\beta^\ast_1)\cong\Z$ and $\coker(\beta^\ast_2)\cong\Z_2\oplus\Z_2\oplus\Z$. The maps $\beta^\vee_{1,2}$ are given by
\ben
\Bhalfarray{rcccccrcccc}
\beta^\vee_1:\ \Z^3 & \rightarrow & \coker(\beta^\ast_1) &\cong & \Z &\quad&
\beta^\vee_2:\ \Z^3 & \rightarrow & \coker(\beta^\ast_2) &\cong & \Z_2 \oplus \Z_2 \oplus \Z\\
e_1^\ast & \mapsto & e_1^\ast + 2 e_3^\ast & \leftrightarrow & 2 &\quad&
e_1^\ast & \mapsto & e_1^\ast + e_3^\ast & \leftrightarrow & (\bar1,\bar0,1) \\
e_2^\ast & \mapsto & e_2^\ast + 2 e_3^\ast & \leftrightarrow & 2 &\quad&
e_2^\ast & \mapsto & e_2^\ast + e_3^\ast & \leftrightarrow & (\bar0,\bar1,1) \\
e_3^\ast & \mapsto & e_3^\ast & \leftrightarrow & 1 &\quad&
e_3^\ast & \mapsto & e_3^\ast & \leftrightarrow & (\bar0,\bar0,1). \\
\Ehalfarray
\ee
So the groups $H_{1,2} = H(\beta_{1,2})$ act via $\TT$ in the following way:
\ben
\Bhalfarray{rclcrcl}
H_1 \cong \CC^\ast & \rightarrow & \TT &\quad&
H_2 \cong \ZZ_2 \times \ZZ_2 \times \CC^\ast& \rightarrow & \TT \\
t & \mapsto & (t^2,t^2,t) &\quad&
(\bar a,\bar b,t) & \mapsto & ((-1)^{\bar a} t, (-1)^{\bar b} t, t)\\
\Ehalfarray
\ee
Putting everything together we obtain from the stacky fan $\Stacky_{1,2}=(\Sigma,\Z^2,\beta_{1,2})$ the toric DM stack $\XX(\Stacky_{1,2}) = \left[ \quotient{ \C^3 \setminus \{0\}}{H_{1,2}} \right]$.
\subsubsection*{(ii) Weighted projective space}
For the fan of $\PP(1,1,2)$ (see Figure~\ref{fig:projweight}) we take $\beta$ trivial as in Section~\ref{sympstack}. Diagonalising the transpose of $M$ given in~\eqref{matrix2} yields
\ben
\begin{pmatrix}
1 & 0 & 0\\
0 & 1 & 0\\
1 & 2 & 1\\
\end{pmatrix}
\cdot
\begin{pmatrix}
1 & 0\\
0 & 1\\
-1 & -2\\
\end{pmatrix}
=
\begin{pmatrix}
1 & 0\\
0 & 1\\
0 & 0\\
\end{pmatrix}.
\ee
Hence $\coker(\beta^\ast)\cong\Z$ and the map $\beta^\vee$ is
\ben
\Bhalfarray{rcclcrccl}
\beta^\vee:\ \ZZ^3 & \rightarrow & \coker(\beta^\ast) &\cong\  \ZZ\\
e_1^\ast & \mapsto & e_1^\ast + e_3^\ast & \leftrightarrow\  1  \\
e_2^\ast & \mapsto & e_2^\ast + 2 e_3^\ast & \leftrightarrow\  2  \\
e_3^\ast & \mapsto & e_3^\ast & \leftrightarrow\  1. \\
\Ehalfarray
\ee
So $H$ acts via $\TT$ by
\ben
\Bhalfarray{rcl}
H=\CC^\ast & \rightarrow & \TT\\
t & \mapsto & (t,t^2,t)
\Ehalfarray
\ee
and we therefore obtain $\XX(\Stacky)=\left[ \quotient{\C^3\setminus\{0\}}{H}\right]$.
%
%
%
%
%
\section{Comparison of symplectic and complex toric DM stacks}
Consider the normal fan $\Sigma$ of a given polytope and a choice of ray generators $\beta:\Z^m\to N$, $e_j\mapsto n_ju_j$. Our aim is to show that the differentiable stacks induced by $(\Sigma,\beta)$ following the construction in Sections~\ref{sympstack} and~\ref{anastack} are isomorphic, that is, they have Morita equivalent atlases. To apply the simplified criterion of Morita equivalence as given in Section~\ref{groupstack}, we first establish the inclusions
\ben
\ker \bar \beta \hookrightarrow  H(\beta) \text{ and } 
\mu_\Sigma^{-1}(\xi)\hookrightarrow\C^m_\Sigma
\ee
(where $\mu_\Sigma$ is moment map induced by $\Sigma$, see below).

\begin{lem}
For $\beta:\Z^m \rightarrow N$ with finite cokernel, $\ker\bar\beta$ and
$\Hom(\coker\beta^\ast,\R/\Z)$ are naturally isomorphic.
\end{lem}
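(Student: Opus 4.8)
The plan is to identify both groups with the kernel of the map on characters induced by $\beta^\ast$, using the compact torus $\R/\Z$ as the coefficient group throughout. First I would note that the short exact sequence $0\to M\stackrel{\beta^\ast}{\to}(\Z^m)^\ast\stackrel{\beta^\vee}{\to}\coker\beta^\ast\to0$ from~\eqref{ex:theSeqStack} (valid precisely because $\beta$ has finite cokernel, so that $\beta^\ast$ is injective) is a sequence of finitely generated abelian groups. Applying the contravariant functor $\Hom_\Z(-,\R/\Z)$ to it yields a six-term exact sequence involving $\Ext^1_\Z(-,\R/\Z)$; but $\R/\Z$ is a divisible abelian group, hence injective as a $\Z$-module, so all the $\Ext^1$ terms vanish and we get a short exact sequence
\ben
0\to\Hom(\coker\beta^\ast,\R/\Z)\to\Hom((\Z^m)^\ast,\R/\Z)\stackrel{(\beta^\ast)^\vee}{\longrightarrow}\Hom(M,\R/\Z)\to0.
\ee
In particular $\Hom(\coker\beta^\ast,\R/\Z)$ is naturally the kernel of $(\beta^\ast)^\vee$.

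Next I would identify this kernel with $\ker\bar\beta$. Using the canonical isomorphisms $\Hom((\Z^m)^\ast,\R/\Z)\cong\R^m/\Z^m=T^m$ and $\Hom(M,\R/\Z)\cong N\otimes(\R/\Z)=N_\R/N\cong T^d$ (the latter because $M=\Hom(N,\Z)$ is the $\Z$-dual of the free finite-rank lattice $N$, so double duality gives $\Hom(\Hom(N,\Z),\R/\Z)\cong N\otimes_\Z\R/\Z$), one checks that under these identifications the map $(\beta^\ast)^\vee$ becomes exactly $\bar\beta:T^m\to T^d$, the map on compact tori induced by $\beta$. This is the routine but essential bookkeeping step: one takes $w\in M$, $x\in(\Z^m)^\ast\otimes\R$ representing a class in $T^m$, and compares $\langle w,\bar\beta(x)\rangle$ with $\langle\beta^\ast w,x\rangle$; the defining formula $\beta^\ast w=[e_j\mapsto\langle w,\beta(e_j)\rangle]$ from Remark~\ref{rmk:stackVar}(i) makes these agree modulo $\Z$. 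Hence $\ker\bar\beta=\ker(\beta^\ast)^\vee=\Hom(\coker\beta^\ast,\R/\Z)$, and naturality is inherited from naturality of $\Hom(-,\R/\Z)$ and of the double-duality isomorphism.

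The main obstacle, such as it is, is bookkeeping rather than conceptual: one must be careful that the identification $\Hom(M,\R/\Z)\cong N\otimes\R/\Z$ is the correct one (it uses that $N$ is \emph{free} of finite rank, so $M$ is its honest $\Z$-dual and $N\cong M^\vee$), and one must track signs/conventions so that the induced map is literally $\bar\beta$ and not some twist of it. There is also a small point worth remarking: strictly one could instead obtain the result by tensoring the exact sequence $0\to\ker\bar\beta\to T^m\to T^d\to0$ on torus level with the character functor, but going through $\Hom(-,\R/\Z)$ directly is cleaner since it produces the isomorphism with $\Hom(\coker\beta^\ast,\R/\Z)$ in one stroke and makes naturality transparent. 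Once the identification of maps is in hand, exactness of $\Hom(-,\R/\Z)$ applied to~\eqref{ex:theSeqStack} finishes the proof immediately.
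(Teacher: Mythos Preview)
Your proof is correct and follows essentially the same approach as the paper: apply the exact functor $\Hom(-,\R/\Z)$ to the sequence~\eqref{ex:theSeqStack} and identify the resulting short exact sequence with the torus-level sequence $0\to\ker\bar\beta\to\R^m/\Z^m\stackrel{\bar\beta}{\to}N_\R/N\to0$. The paper presents this as a single commutative diagram, whereas you spell out the divisibility of $\R/\Z$, the double-duality identification $\Hom(M,\R/\Z)\cong N\otimes\R/\Z$, and the verification that the induced map is $\bar\beta$; these are exactly the implicit ingredients behind the paper's diagram.
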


\begin{proof}
We apply the exact functor $\Hom(-,\R/\Z)$ to the sequence~\eqref{ex:theSeqStack}:
\ben
\xymatrix{
0 \ar@{=}[d] & N \otimes \quotient{\R}{\Z} \ar[l] \ar@{=}[d] & \left( \quotient{\R}{\Z} \right)^m \ar[l] \ar@{=}[d] 
& \Hom\left( \coker \beta^\ast, \quotient{\R}{\Z} \right) \ar[l] \ar@{=}[d] & 0 \ar[l]\ar@{=}[d] \\
0 & \quotient{N_\R}{N} \ar[l] & \quotient{\R^m}{\Z^m} \ar[l]^{\bar \beta} 
& \ker \bar \beta \ar[l]  & 0 \ar[l]
}
\ee
\end{proof}

Using the natural isomorphism $\C^\ast=S^1\times\R_+\cong\R/\Z\times\R$ provided by the exponential map we get the first inclusion
\begin{eqnarray*}
\ker \bar \beta & = & \Hom\left( \coker \beta^\ast, \quotient{\RR}{\ZZ} \right)\\
& \subset & \Hom\left( \coker \beta^\ast, \quotient{\RR}{\ZZ} \right) \times
\underbrace{\Hom\left( \coker \beta^\ast, \RR \right)}_{=: C_\RR} \cong
H(\beta)
\end{eqnarray*}

For the second inclusion, we apply as above $-\otimes\R$ to the sequence~\eqref{ex:theSeqStack} and get
\ben
0 \rightarrow M_\R\rightarrow(\R^m)^\ast\stackrel{\beta^\vee_\R}{\longrightarrow}
\coker\beta^\ast\otimes\R\rightarrow0.
\ee
We compose the moment map $\mu_0:\C^m\rightarrow(\R^m)^\ast,(z_1, \ldots, z_m)\mapsto
(|z_1|^2, \ldots, |z_m|^2)/2$ with the map $\beta^\vee_\R$ and obtain the moment map
\ben
\mu_\Sigma:\C^m\stackrel{\mu_0}{\longrightarrow}(\R^m)^\ast\stackrel{\beta^\vee_\R}{\longrightarrow}\coker\beta^\ast\otimes\R
\ee
used for the reduction.

\begin{lem}
The map $\mu_\Sigma$ does not depend on the ``stacky'' information, i.e.\ on the chosen
$n_j\in\N$ in the definition of $\beta$. In particular we obtain the same map for the trivial labelling $n_j=1$, $j=1,\ldots,m$.
\end{lem}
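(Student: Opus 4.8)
The plan is to reduce everything to the trivial labelling and to show that replacing a general $\beta$ by $\beta_0\colon\Z^m\to N,\ e_j\mapsto u_j$ alters the pair $(\coker\beta^\ast,\beta^\vee)$ only by a finite (torsion) group, which is then annihilated by the exact functor $-\otimes\R$. Write $D=\mathrm{diag}(n_1,\ldots,n_m)\colon\Z^m\to\Z^m$, so that $\beta=\beta_0\circ D$ and, dualising, $\beta^\ast=D^\ast\circ\beta_0^\ast$ with $D^\ast=\mathrm{diag}(n_1,\ldots,n_m)$ acting on $(\Z^m)^\ast$. Since by definition $\mu_\Sigma=\beta^\vee_\R\circ\mu_0$ and $\mu_{\Sigma,0}=\beta_{0,\R}^\vee\circ\mu_0$ use the \emph{same} label--free map $\mu_0$, it suffices to compare the two cokernel projections $\beta^\vee_\R$ and $\beta_{0,\R}^\vee$ out of the fixed space $(\R^m)^\ast$.

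To do so I would set up the commutative ladder
\[
\xymatrix{
M \ar[r]^{\beta_0^\ast} \ar@{=}[d] & (\Z^m)^\ast \ar[r]^{\beta_0^\vee} \ar[d]^{D^\ast} & \coker\beta_0^\ast \ar[d]^{\bar D} \\
M \ar[r]^{\beta^\ast} & (\Z^m)^\ast \ar[r]^{\beta^\vee} & \coker\beta^\ast
}
\]
and apply the snake lemma, exactly as in the proof of Proposition~\ref{prp:localStack}. As $\id_M$ and $D^\ast$ are injective with $\coker(\id_M)=0$ and $\coker(D^\ast)=\bigoplus_{j}\Z/n_j\Z$, the snake sequence shows that the induced map $\bar D\colon\coker\beta_0^\ast\to\coker\beta^\ast$ is injective with cokernel the pure torsion group $\bigoplus_{j}\Z/n_j\Z$. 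Applying $-\otimes\R$ kills this torsion and produces a canonical isomorphism $\bar D_\R\colon\coker\beta_0^\ast\otimes\R\xrightarrow{\sim}\coker\beta^\ast\otimes\R$, with which I would identify the two targets once and for all.

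What remains, and what I expect to be the main obstacle, is to upgrade this to an equality of maps \emph{out of the fixed source} $(\R^m)^\ast$. The ladder only furnishes the relation $\bar D_\R\circ\beta_{0,\R}^\vee=\beta^\vee_\R\circ D^\ast_\R$, in which the diagonal $D^\ast_\R$ still intervenes; the desired conclusion $\mu_\Sigma=\bar D_\R\circ\mu_{\Sigma,0}$ (literal label--independence after the canonical identification) is equivalent to absorbing this diagonal, i.e.\ to showing that $(D^\ast_\R-\id)\,\mu_0(z)$ lies in $\ker\beta^\vee_\R=\im\beta^\ast_\R$ for every $z\in\C^m$. Verifying that the free part of $\beta^\vee$ is thus genuinely unchanged by the labels --- and not merely rescaled by $D_\R$ --- is the delicate heart of the statement, and is where I would concentrate the remaining work.
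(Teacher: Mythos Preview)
The paper's argument is far more direct than yours. It simply notes that $\beta^\vee_\R$ is the canonical projection $(\R^m)^\ast\twoheadrightarrow(\R^m)^\ast/\im\beta^\ast_\R$ and then asserts that the two subspaces $\im\beta^\ast_\R$ and $\im\beta_{0,\R}^\ast$ of the \emph{fixed} ambient space $(\R^m)^\ast$ coincide, so that the two quotient maps are literally the same. There is no snake lemma, no comparison isomorphism $\bar D_\R$ between two a~priori different cokernels, and therefore no diagonal $D^\ast_\R$ left to absorb: the obstacle you describe as the ``delicate heart'' is entirely a byproduct of routing the comparison through $\bar D_\R$ rather than through the identity on $(\R^m)^\ast$.

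That said, your instinct that something substantive is hiding here is correct, and the step you isolate is not merely delicate --- it actually \emph{fails}. Take $\PP^1$ with labels $(n_1,n_2)=(1,2)$: then $\im\beta^\ast_\R=\R\cdot(1,-2)$, whereas $(D^\ast_\R-\id)\,\mu_0(0,1)=(0,\tfrac12)$ does not lie on this line, so your reduction cannot be completed. The same example shows $\im\beta^\ast_\R=\R\cdot(1,-2)\neq\R\cdot(1,-1)=\im\beta_{0,\R}^\ast$, so the paper's one--line assertion is also incorrect; and indeed the moment maps computed in the paper's own examples in Section~\ref{sympstack} (e.g.\ $2|z_0|^2+2|z_1|^2+|z_2|^2$ for the labelling $(1,1,2)$ of $\PP^2$, versus $|z_0|^2+|z_1|^2+|z_2|^2$ for the trivial labelling) visibly depend on the $n_j$. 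So the lemma, read literally, is false, and neither your route nor the paper's establishes it; what the rest of Section~5 uses must be formulated and argued more carefully.
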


\begin{proof}
The map $\beta^\vee$ is defined as the canonical projection
$(\ZZ^m)^\ast \rightarrow \quotient{(\ZZ^m)^\ast}{\im \beta^\ast}$.
Let $\beta_0$ be the map defined by $e_i \mapsto u_i$.
After applying $- \otimes \RR$ the images $\im \beta^\ast_\RR$ and
$\im \beta^\ast_{0,\RR}$ are equal, hence $\beta^\vee_\RR$ and
$\beta^\vee_{0,\RR}$ are equal.
\end{proof}

The map $\beta_0$ together with the fan $\Sigma$ are just the data for the usual Cox construction of the toric variety $X(\Sigma)$. As the definition of $\C^m_\Sigma$ is also independent of $\beta$ we obtain the second inclusion $\mu_\Sigma^{-1}(\xi) \subset \CC^m_\Sigma$, since this is already known for toric varieties, see Theorem 1.4 in Appendix 1 of~\cite{gui}.

\medskip

The next step requires a closer look at the group action. We set
\ben
\xymatrix@R=1pt{
U = \mu_\Sigma^{-1}(\xi) \ar@{^(->}[r] & V = \C^m_\Sigma \\
\circlearrowleft & \circlearrowleft \\
G = \ker \bar \beta  \ar@{^(->}[r] & 
H = H(\beta).
}
\ee
As observed above we can write $H = G \times C_\R$ with $C_\R= \Hom( \coker \beta^\ast, \R)$. 
Let $\coker \beta^\ast = \Z^l \oplus T$ be an arbitrary splitting into the free and the torsion part. Since $\R$ is torsion--free, $C_\R\cong \Hom( \Z^l, \R) = \R^l$. Hence $C_\R$ and its action on $V$  do not depend on the coefficients $n_j$ in the definition of $\beta$. In other words, all the stacky information of $\beta$ is already contained in $G$.

\begin{thm}
Let $(\Sigma,\beta)$ be a stacky fan. Then the two stacks
\ben
\left[ \quotient{\mu_\Sigma^{-1}(\xi)}{\ker \bar \beta} \right]
\text{ and }
\left[ \quotient{\CC^m_\Sigma}{H(\beta)} \right]
\ee
are isomorphic.
\end{thm}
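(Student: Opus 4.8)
The plan is to show that the two inclusions
\[
\phi\colon U=\mu_\Sigma^{-1}(\xi)\hookrightarrow V=\C^m_\Sigma,\qquad \psi\colon G=\ker\bar\beta\hookrightarrow H=H(\beta)
\]
constitute a Morita equivalence between the translation groupoids $G\times U\rightrightarrows U$ and $H\times V\rightrightarrows V$, and then to invoke the proposition of Section~\ref{groupstack} identifying Morita equivalence with isomorphism of the associated stacks. Since $\mu_\Sigma=\beta^\vee_\R\circ\mu_0$ is the moment map of the $G$-action on $\C^m$, it is $G$-invariant, so $U$ is $G$-stable and the $G$-action on $U$ is just the restriction of the $H$-action; in particular the compatibility $\phi(g\bullet u)=\psi(g)\bullet\phi(u)$ holds automatically, and because $\phi$ and $\psi$ are inclusions the simplified Morita criterion reduces to two conditions: \refmoritadiagp{}, that $h\in H$ and $m\in U$ with $h\bullet m\in U$ force $h\in G$; and \refmoritasurj{}, that $H\bullet U=V$. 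I will use the splitting $H=G\times C_\R$ established above, with $C_\R=\Hom(\coker\beta^\ast,\R)$ acting on $\C^m$ through the positive scalings in $(\C^\ast)^m$; identifying $C_\R$ with $\mf g=\ker\beta_\R=\{a\in\R^m\mid\sum_j a_jn_ju_j=0\}$, an element $a$ acts by $(\exp a)\bullet z=(e^{a_j}z_j)_j$, and $\langle\mu_\Sigma(z),a\rangle=\tfrac12\sum_j\lvert z_j\rvert^2 a_j$ for $a\in\mf g$. The only combinatorial input I need is local: for any $z\in\C^m_\Sigma$ the index set $S_z=\{j\mid z_j=0\}$ is contained in the rays of a single maximal cone of the simplicial fan $\Sigma$, so $\{u_j\mid j\in S_z\}$ is linearly independent and $\mf g$ contains no nonzero vector supported on $S_z$.

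For condition~\refmoritadiagp{}, write $h=g\cdot\exp(a)$ with $g\in G$ and $a\in\mf g$. Since $G$ preserves $U$, the hypothesis $h\bullet m\in U$ gives $(\exp a)\bullet m=g^{-1}\bullet(h\bullet m)\in U$, i.e.\ $\mu_\Sigma((\exp a)\bullet m)=\xi=\mu_\Sigma(m)$. Pairing the difference of these two moment values against $a\in\mf g$, one computes
\[
0=\langle \mu_\Sigma((\exp a)\bullet m)-\mu_\Sigma(m),\,a\rangle=\tfrac12\sum_{j=1}^m a_j(e^{2a_j}-1)\lvert m_j\rvert^2 .
\]
Every summand is nonnegative, as $t(e^{2t}-1)\ge 0$ with equality only at $t=0$; hence $a_j=0$ whenever $m_j\neq 0$, so $a$ is supported on $S_m$. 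As $m\in U\subseteq\C^m_\Sigma$, the combinatorial input forces $a=0$, whence $h=g\in G$.

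For the surjectivity~\refmoritasurj{} it is enough, again because $G$ preserves $U$, to prove $C_\R\bullet U=V$: given $z\in\C^m_\Sigma$, I must produce $a\in\mf g$ with $(\exp a)\bullet z\in U$. The natural device is the Kempf--Ness type function $\Psi_z\colon\mf g\to\R$, $\Psi_z(a)=\tfrac14\sum_j\lvert z_j\rvert^2 e^{2a_j}-\langle\xi,a\rangle$; the same computation as above shows $d\Psi_z|_a$ is the functional $b\mapsto\langle\mu_\Sigma((\exp a)\bullet z)-\xi,\,b\rangle$, so a critical point of $\Psi_z$ is precisely an $a$ with $(\exp a)\bullet z\in U$. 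Now $\Psi_z$ is manifestly convex, and it is coercive on $\mf g$ exactly because $z$ lies in $\C^m_\Sigma$, which forces $\xi$ into the relative interior of the cone spanned by $\{\beta^\vee_\R(e_j^\ast)\mid z_j\neq 0\}$ (the combinatorial description of the semistable locus, equivalently the statement that $\xi$ is a regular value in the moment cone). A convex coercive function attains its infimum, and any minimiser is the required critical point.

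With \refmoritadiagp{} and \refmoritasurj{} in hand, $(\psi,\phi)$ is a Morita equivalence and the two quotient stacks are isomorphic. The step I expect to be the genuine obstacle is the coercivity in~\refmoritasurj{}: it is the only place where the global combinatorics of $\Sigma$ and the position of $\xi$ enter, rather than formal groupoid bookkeeping or the elementary positivity used for~\refmoritadiagp{}, and it is essentially the classical fact that the GIT quotient of $\C^m_\Sigma$ by $H$ coincides with the symplectic reduction $\mu_\Sigma^{-1}(\xi)/G$. If one prefers to avoid reproving it, one can note that $C_\R$, $\C^m_\Sigma$ and $\mu_\Sigma$ are all independent of the stacky weights $n_j$, so that~\refmoritasurj{} for general $\beta$ follows from the case of trivial labels $n_j=1$, which is recorded in the toric literature cited above (e.g.\ Theorem~1.4 in Appendix~1 of~\cite{gui}, Theorem~1.11 in Chapter~5 of~\cite{cls}).
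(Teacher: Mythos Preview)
Your proof is correct and follows the same architecture as the paper: verify~\refmoritadiagp{} and~\refmoritasurj{} for the inclusion $(U,G)\hookrightarrow(V,H)$ using the splitting $H=G\times C_\R$. The difference lies in how the two conditions are dispatched. For~\refmoritadiagp{}, the paper simply observes that $C_\R$, $U$ and the action are independent of the labels $n_j$ and invokes the classical (unstacky) result; you instead give a self-contained argument via the identity $\sum_j a_j(e^{2a_j}-1)\lvert m_j\rvert^2=0$ together with the simpliciality of $\Sigma$, which is both more explicit and more informative, since it isolates exactly where the combinatorics of $\C^m_\Sigma$ enters. For~\refmoritasurj{}, the paper again reduces to the classical case (stating it as $V=C_\R\times U$), whereas your primary route is a Kempf--Ness convexity/coercivity argument; you rightly flag coercivity as the substantive point and, as a fallback, offer precisely the paper's reduction. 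In short: same skeleton, but your treatment of~\refmoritadiagp{} is a genuine improvement in transparency, while for~\refmoritasurj{} both approaches ultimately lean on the same classical input.
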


\begin{proof}
First we show that the inclusion of stacks 
$\left[ \quotient{U}{G} \right] \hookrightarrow \left[ \quotient{V}{H} \right]$ 
satisfies \refmoritadiagp:
\ben
\forall h \in H \text{ and } u \in U\ :\ [h \bullet u \in U \Rightarrow h \in G].
\ee
Since $H$ splits into $H = G \times C_\R$ we only need to test whether an $h\in C_\R$ with $h \bullet u \in U$ is necessarily zero. But this is independent of the specific coefficients $n_j$ in $\beta$, so again we deduce the result from the already known case of toric varieties.

\medskip

That the inclusion of stacks also satisfies~\refmoritasurj:
\ben
H \times U \stackrel{\bullet}{\longrightarrow} V \text{ is surjective,}
\ee
follows from $V=C_\R\times U$. This equality holds since all three ingredients
are independent of the stacky information.
\end{proof}

\begin{rem}
Since the maps commute with the natural torus actions we actually have an {\em equivariant} isomorphism between $[\mu_\Sigma^{-1}(\xi)/\ker\bar\beta]$ and $[\C^m_\Sigma/H(\beta)]$.
This equivariant isomorphism descends to a homeomorphism of the coarse moduli spaces $\mu_\Sigma^{-1}(\xi)/\ker\bar\beta$ and $\C^m_\Sigma/H(\beta)$.
\end{rem}

\begin{center}
{\bf Acknowledgments}
\end{center}
We would like to thank Lars Petersen, David Ploog and the referee for valuable comments on the manuscript.

\bigskip

{\em Added in proof.} After submission of the manuscript we became aware of Sakai's article~\cite{s} which also treats the correspondence between complex and symplectic toric DM stacks.

\bigskip

\noindent 
A.~{\sc Hochenegger}: Mathematisches Institut der Freien Universit\"at Berlin, Arnimallee 3, D--14195 Berlin, F.R.G.\\
e-mail: \texttt{hochen@math.fu-berlin.de}

\smallskip

F.~{\sc Witt}: Mathematisches Institut der Universit\"at M\"unster, Einsteinstra{\ss}e 62, D--48149 M\"unster, F.R.G.\\
e-mail: \texttt{frederik.witt@uni-muenster.de}

\begin{thebibliography}{99}
\bibitem[At]{at}
{\sc M.~Atiyah},
{\em Convexity and commuting Hamiltonians},
Bull. London Math. Soc. {\bf14} no.~1 (1982), 1--15. 

\bibitem[Au]{aud}
{\sc M.~Audin},
{\em The topology of torus actions on symplectic manifolds},
Progress in Mathematics {\bf 93}, Birkh\"auser, Boston, 1991.

\bibitem[BX]{bx}
{\sc K.~Behrend and P.~Xu},
{\em Differentiable Stacks and Gerbes},
J. Symplectic Geom. {\bf9} no.~3 (2011), 285--341.


\bibitem[BCS]{bcs} 
{\sc L.~Borisov, L.~Chen and G.~Smith},
{\em The orbifold Chow ring of toric Deligne-Mumford stacks}, 
J. Amer. Math. Soc. {\bf18} no.~1 (2005), 193--215.

\bibitem[CdS]{cds}
{\sc A.~Cannas da Silva},
{\em Lectures on symplectic geometry},
Lecture Notes in Mathematics {\bf1764}, Springer, Berlin, 2001. 

\bibitem[C]{cox} 
{\sc D.~Cox},
{\em The homogeneous coordinate ring of a toric variety},
J. Algebraic Geom. {\bf4} no.~1 (1995), 17--50.
 
\bibitem[CLS]{cls} 
{\sc D.~Cox, J.~Little and H.~Schenk},
{\em Toric Varieties},
to appear in the Graduate Studies in Mathematics series of the AMS. 

\bibitem[CM]{cm}
{\sc M.~Crainic and I.~Moerdijk},
{\em Foliation groupoids and their cyclic homology},
Adv. Math. {\bf157} no.~2 (2001), 177--197. 

\bibitem[D]{del}
{\sc T.~Delzant}, 
{\em Hamiltoniens p\'eriodiques et images convexes de l'application moment},
Bull. Soc. Math. France {\bf116} no.~3 (1988), 315--339.

\bibitem[F]{f}
{\sc B.~Fantechi},
{\em Stacks for everybody},
European Congress of Mathematics, Vol. I (Barcelona, 2000), 349--359, Progress in Mathematics {\bf201}, Birkh\"auser, Basel, 2001. 

\bibitem[FMN]{fmn}
{\sc B.~Fantechi, E.~Mann and F.~Nironi},
{\em Smooth Toric DM Stacks},
J. Reine Angew. Math. {\bf648} (2010), 201--244.

\bibitem[Gr]{g}
{\sc A.~Grothendieck},
{\em Rev\^etements \'etales et groupe fondamental}, 
S\'eminaire de g\'eom\'etrie alg\'ebrique du Bois-Marie 1960--1961 (SGA 1), Lecture Notes in Mathematics {\bf224}, Springer, Berlin, 1971.

\bibitem[Gu]{gui}
{\sc V.~Guillemin},
{\em Moment maps and combinatorial invariants of Hamiltonian $T^n$--spaces}, 
Progress in Mathematics {\bf 122}, Birkh\"auser, Boston, 1994. 

\bibitem[GS]{gs}
{\sc V.~Guillemin and S.~Sternberg},
{\em Convexity properties of the moment mapping},
Invent. Math. {\bf67} no.~3 (1982), 491--513.

\bibitem[H]{h}
{\sc J.~Heinloth},
{\em Notes on differentiable stacks}, 
Math. Inst. Georg--August--Universit\"at G\"ottingen: Seminars Winter Term 2004/2005, 1--32, Universit\"atsdrucke G\"ottingen, G\"ottingen, 2005. 

\bibitem[HM]{hm}
{\sc A.~Henriques and D.~Metzler},
{\em Presentations of noneffective orbifolds},
Trans. Am. Math. Soc. {\bf356}, no.~6 (2004), 2481--2499.
 
\bibitem[HS]{hs}
{\sc A.~Haefliger and \'E.~Salem},
{\em Actions of tori on orbifolds},
Ann. Global Anal. Geom. {\bf9} no.~1 (1991), 37--59.

\bibitem[I]{i}
{\sc I.~Iwanari},
{\em The category of toric stacks},
Compos. Math. {\bf145} no.~3 (2009), 718--746.

\bibitem[LM]{lm}
{\sc E.~Lerman and A.~Malkin},
{\em Hamiltonian group actions on symplectic Deligne-Mumford stacks and toric orbifolds},
Adv. Math. {\bf229} no.~3 (2012), 984--1000.
 
\bibitem[LT]{lt} 
{\sc E.~Lerman and S.~Tolman},
{\em Hamiltonian torus actions on symplectic orbifolds and toric varieties},
Trans. Amer. Math. Soc. {\bf349} no.~10 (1997), 4201--4230.

\bibitem[MM]{mm}
{\sc I.~Moerdijk and J.~Mr\v cun},
{\em Introduction to foliations and Lie groupoids},
Cambridge Studies in Advanced Mathematics, {\bf91}, CUP, Cambridge, 2003.

\bibitem[R]{r}
{\sc M.~Romagny}, 
{\em Group actions on stacks and applications},
Michigan Math. J. {\bf53} no.~1 (2005), 209--236.

\bibitem[S]{s}
{\sc H.~Sakai},
{\em The symplectic Deligne--Mumford stack associated to a stacky polytope},
to appear in: Results Math.
\end{thebibliography}
\end{document}